%%%%%%%%%%%%%%%%%%%%%%% file template.tex %%%%%%%%%%%%%%%%%%%%%%%%%
%
% This is a general template file for the LaTeX package SVJour3
% for Springer journals.          Springer Heidelberg 2010/09/16
%
% Copy it to a new file with a new name and use it as the basis
% for your article. Delete % signs as needed.
%
% This template includes a few options for different layouts and
% content for various journals. Please consult a previous issue of
% your journal as needed.
%
%%%%%%%%%%%%%%%%%%%%%%%%%%%%%%%%%%%%%%%%%%%%%%%%%%%%%%%%%%%%%%%%%%%
%
% First comes an example EPS file -- just ignore it and
% proceed on the \documentclass line
% your LaTeX will extract the file if required
% [arxiv_v2: filecontents example.eps stripped, 188 chars]
\RequirePackage{fix-cm}
\documentclass[smallextended]{svjour3}       % onecolumn (second format)
\smartqed  % flush right qed marks, e.g. at end of proof
\usepackage{graphicx}
\usepackage{amsmath}
\usepackage{amssymb}
\usepackage{amsfonts}
\usepackage{xcolor}
\usepackage[margin=1.5cm]{geometry}
\usepackage{xcolor,graphicx,subfig,lipsum}
\usepackage{bm}
%\usepackage{refcheck}
%
% \usepackage{mathptmx}      % use Times fonts if available on your TeX system
%
% insert here the call for the packages your document requires
%\usepackage{latexsym}
% etc.
%
% please place your own definitions here and don't use \def but
% \newcommand{}{}
%
% Insert the name of "your journal" with
 %\journalname{AADM}
%
\begin{document}

\title{New Exponential operators connected with $a^2+x^2$: a generalization of Post-Widder and Ismail May}

\titlerunning{Exponential-type operators}        % if too long for running head
\author{  Vijay Gupta \and Anjali* }

\authorrunning{ V. Gupta \and Anjali* } % if too long for running head

\institute{Vijay Gupta \at
              Department of Mathematics\\
              Netaji Subhas University of Technology \\
              Sector 3 Dwarka, New Delhi 110078, India\\
              \email{vijaygupta2001@hotmail.com; vijay@nsut.ac.in}\\
                            Anjali* \at  Department of Mathematics\\
              Netaji Subhas University of Technology \\
              Sector 3 Dwarka, New Delhi 110078, India\\
              \email{anjali.ma20@nsut.ac.in; anjaligupta.2604037@gmail.com}\\
}

\date{Received: date / Accepted: date}
% The correct dates will be entered by the editor

\maketitle

\begin{abstract} The present study offers a general exponential operator connected with $a^2+x^2$; for positive real $a$. We estimate the asymptotic formula for simultaneous and ordinary approximation of the constructed operator. In the last section, we graphically interpret the created operator's convergence to two periodic functions $``x \sin(x)"$ and $``-\frac{x}{2} \cos (\pi x)"$. We also consider the limiting case $a \to 0$; which provides Post-Widder operator. In addition, we analyze each particular case of the defined operator and determine  the optimal value of $a$, that would yield the greatest approximation; this facilitates us to contrast the well-known operators existing in the literature, especially the Post-Widder operator and the operator due to Ismail-May.
\\
\keywords{Exponential operators; approximation properties; simultaneous approximation; pictorial comparison}

% \PACS{PACS code1 \and PACS code2 \and more}
%%%% \subclass{41A35 \and 41A30}
\subclass{41A25 \and 41A35}
 \end{abstract}

\section{Operators associated with $a^2+x^2$}
Many operators are associated with different probability distribution, here we consider a different exponential operators associated with $a^2+x^{2}$; $a > 0$, with the kernel $\kappa^{a}_\lambda(x,\nu)$, that responds to the following partial differential equation
\begin{eqnarray}\label{ENO1}
\frac{\partial}{\partial x}\kappa^{a}_\lambda(x,\nu)=\frac{\lambda(\nu-x)\kappa^{a}_\lambda(x,\nu)}{{a}^2+x^2},
\end{eqnarray}
integrating, we get
\begin{equation*}
\ln \kappa^{a}_\lambda(x,\nu)=\frac{\lambda \nu}{a}\arctan \frac{x}{a} -\frac{\lambda}{2}\ln ({a}^2+x^2)+\ln A_{T}^{a} \left(\lambda,\nu \right),
\end{equation*}
implying
\begin{equation*}
\left( {a}^2+x^{2}\right) ^{\lambda/2} \cdot \kappa^{a}_\lambda(x,\nu)=A_{T}^a \left(\lambda,\nu \right)e^{\frac{\lambda \nu}{a}\arctan \frac{x}{a}},
\end{equation*}
where $A_{T}^a\left(\lambda,\nu \right)$ is integration constant independent of $x$. The operator has the form:
\begin{equation*}
\left( T_{\lambda, a}f\right) \left( x\right) =\frac{1}{\left(
a^2+x^{2}\right) ^{\lambda/2}}\int_{-\infty }^{\infty} A_{T}^a\left(\lambda,\nu \right)
e^{\frac{\lambda \nu}{a}\arctan \frac{x}{a}}f(\nu)d\nu,
\end{equation*}
or
\begin{equation*}
\left(T_{\lambda, a}f\right) \left( x\right) =\frac{a}{\left(
a^2+x^{2}\right) ^{\lambda/2}}\int_{-\infty }^{\infty }B_{T}^a \left( \lambda,\nu \right)
e^{\nu\arctan \frac{x}{a}}f\left(\frac{a \nu}{\lambda}\right)d\nu.
\end{equation*}
Our objective is to find $B_{T}^a\left( \lambda,\nu \right)$.
In order to have the normalizing condition, we need $B^a _{T}\left( \lambda,\nu \right)$ such that
\begin{equation*}
\int_{-\infty }^{\infty }B_{T}^a \left( \lambda,\nu \right) e^{\nu\arctan
\frac{x}{a}}d\nu=\frac{1}{a}\left( a^2+x^{2}\right) ^{\lambda/2},
\end{equation*}
or
\begin{equation*}
\int_{-\infty }^{\infty }B_{T}^a \left( \lambda,\nu \right) e^{s\nu}d\nu=a^{\lambda-1} \left( \sec s\right) ^{\lambda}.
\end{equation*}
For $\lambda>0$, $Re \ \  s \in \left(-\frac{\pi}{2},\frac{\pi}{2}\right)$ and following Ismail and May \cite[Lemma 3.3]{isma}), we have
\begin{equation*}
\left(\sec \ \ s\right)^{\lambda}=\frac{%
2^{\lambda -2}}{\pi \Gamma \left( \lambda \right) }\int_{-\infty }^{\infty }\left\vert \Gamma \left( \frac{\lambda +i \nu}{2}%
\right) \right\vert ^{2}e^{s\nu}d\nu,
\end{equation*}
implying
$$B_{T}^a\left( \lambda,\nu \right)=a^{\lambda-1} \frac{%
2^{\lambda -2}}{\pi \Gamma \left( \lambda \right) }\left\vert \Gamma \left( \frac{\lambda +i \nu}{2}%
\right) \right\vert ^{2}.$$
The general operator takes the following form:
\begin{equation*}
\left(T_{\lambda, a}f\right) \left( x\right) = \frac{2^{\lambda-2}}{\pi \Gamma \left( \lambda\right)}\frac{a^{\lambda}}{\left(
a^2+x^{2}\right) ^{\lambda/2}}\int_{-\infty }^{\infty }\left\vert  \Gamma \left(\frac{\lambda  +i \nu}{2}\right) \right\vert ^{2}
e^{\nu\arctan \frac{x}{a}}f\left(\frac{a \nu}{\lambda}\right)d\nu.\\
\end{equation*}
More precisely,
\begin{equation}\label{E1}
\left( T_{\lambda, a}f\right) \left( x\right) = \frac{2^{\lambda-2}\lambda}{\pi \Gamma \left( \lambda\right)}\frac{a^{\lambda-1}}{\left(
a^2+x^{2}\right) ^{\lambda/2}}\int_{-\infty }^{\infty }\left\vert  \Gamma \left(\frac{\lambda a +i \lambda \nu}{2a}\right) \right\vert ^{2}
e^{\frac{\lambda \nu}{a}\arctan \frac{x}{a}}f(\nu)d\nu.
\end{equation}
The following are the particular cases of (\ref{E1}):\\
(i)  For $x>0$ and $a \rightarrow 0$, we proceed as follows:\\
Following (see, e.g.,  \cite[p.~47,~Eq.~(6)]{erd}), we can write
\begin{eqnarray}\label{EN1}
\left\vert  \Gamma \left(\frac{\lambda a +i \lambda \nu}{2a}\right) \right\vert^{2}\sim(2\pi)\left|\frac{\lambda \nu}{2a}\right|^{\lambda-1}e^{-\pi \left|\frac{\lambda \nu}{2a}\right|},\qquad \text{ as }\left|\frac{\lambda \nu}{2a}\right|\to \infty.
\end{eqnarray}
In \cite[p.\;455]{isma} there was a misprint i.e. $\left\vert  \Gamma \left(x+i\nu\right) \right\vert ^{2}\sim (2\pi)^{-1}\left|\nu\right|^{2x-1}e^{-\pi \left|\nu\right|} $  as $ \left|\nu\right|\to \infty,$
which is defined correctly in (\ref{EN1}).
Next, we use
\begin{eqnarray}\label{EEN1}
\arctan \varsigma+\arctan  \frac{1}{\varsigma}&=&\begin{cases}
\frac{\pi}{2},\quad \varsigma>0,\\
 -\frac{\pi}{2},\quad \varsigma<0.
\end{cases}
\end{eqnarray}
Using the identities (\ref{EN1}) and (\ref{EEN1}), we have
\begin{eqnarray*}
\left(P_{\lambda}f \right)(x)&:=&\left(T_{\lambda,a \rightarrow 0} f\right)(x)\\
&=&\lim_{a\to 0}\frac{2^{\lambda-2}\lambda}{\pi \Gamma \left( \lambda\right)}\frac{a^{\lambda-1} }{\left(
a^2+x^{2}\right) ^{\lambda/2}}\int_{-\infty }^{\infty }\left\vert  \Gamma \left(\frac{\lambda a +i \lambda \nu}{2a}\right) \right\vert ^{2}
e^{\frac{\lambda \nu}{a}\arctan \frac{x}{a}}f(\nu)d \nu \nonumber\\
&=&\lim_{a\to 0}\frac{\lambda^\lambda}{ \Gamma \left( \lambda\right)}\frac{1 }{\left(
a^2+x^{2}\right) ^{\lambda/2}}\int_{-\infty }^{\infty }\left|\nu \right|^{\lambda-1}e^{-\pi \left|\frac{\lambda \nu}{2a}\right|}
e^{\frac{\lambda \nu}{a}\arctan \frac{x}{a}}f(\nu)d\nu\nonumber\\
&=&\lim_{a\to 0}\frac{\lambda^\lambda}{ \Gamma \left( \lambda\right)}\frac{(-1)^{\lambda-1} }{\left(
a^2+x^{2}\right) ^{\lambda/2}}\int_{-\infty }^{0 }\nu^{\lambda-1}e^{ \frac{\pi\lambda \nu}{2a}}
e^{\frac{\lambda \nu}{a}\arctan \frac{x}{a}}f(\nu)d\nu \nonumber\\
&&+\lim_{a\to 0}\frac{\lambda^\lambda}{ \Gamma \left( \lambda\right)}\frac{1 }{\left(
a^2+x^{2}\right) ^{\lambda/2}}\int_{0}^{\infty }\nu^{\lambda-1}e^{ \frac{-\pi\lambda \nu}{2a}}
e^{\frac{\lambda \nu}{a}\arctan \frac{x}{a}}f(\nu)d\nu\nonumber\\
&=&\lim_{a\to 0}\frac{\lambda^\lambda}{ \Gamma \left( \lambda\right)}\frac{(-1)^{\lambda-1} }{\left(
a^2+x^{2}\right) ^{\lambda/2}}\int_{-\infty }^{0 }\nu^{\lambda-1}e^{ \frac{\pi\lambda \nu}{2a}}
e^{\frac{\lambda \nu}{a}\left( \frac{\pi}{2}-\arctan \frac{a}{x}\right)}f(\nu)d\nu\nonumber\\
&&+\lim_{a\to 0}\frac{\lambda^\lambda}{ \Gamma \left( \lambda\right)}\frac{1 }{\left(
a^2+x^{2}\right) ^{\lambda/2}}\int_{0}^{\infty}\nu^{\lambda-1}e^{ \frac{-\pi\lambda \nu}{2a}}
e^{\frac{\lambda \nu}{a}\left( \frac{\pi}{2}-\arctan \frac{a}{x}\right)}f(\nu)d\nu \nonumber\\
&=&\lim_{a\to 0}\frac{\lambda^\lambda}{ \Gamma \left( \lambda\right)}\frac{(-1)^{\lambda-1} }{\left(
a^2+x^{2}\right) ^{\lambda/2}}\int_{-\infty }^{0 }\nu^{\lambda-1}e^{\frac{\pi \lambda \nu}{a}}
e^{-\frac{\lambda \nu}{a}\arctan \frac{a}{x}}f(\nu)d\nu\nonumber\\
&&+\lim_{a\to 0}\frac{\lambda^\lambda}{ \Gamma \left( \lambda\right)}\frac{1 }{\left(
a^2+x^{2}\right) ^{\lambda/2}}\int_{0}^{\infty}\nu^{\lambda-1} e^{-\frac{\lambda \nu}{a}\arctan \frac{a}{x}}f(\nu)d\nu.
\end{eqnarray*}
Next using the limit $\displaystyle \lim_{a \rightarrow 0}\frac{1}{a}\arctan \frac{a}{x}= \frac{1}{x}$, we get

%\begin{eqnarray*}
$$\left(P_{\lambda} f\right)(x)=\frac{\lambda^\lambda}{x^\lambda } \frac{1}{\Gamma(\lambda)} \int_0^\infty \nu^{\lambda-1} e^{\frac{-\lambda \nu}{x}} f(\nu)d\nu,$$
%\end{eqnarray*}
which is the Post-Widder operator \cite[(3.9)]{isma}.\\
\vskip0.01in
(ii) If $a =1$, then we get the exponential operator connected with $1+x^2$ defined by Ismail-May in \cite[(3.10)]{isma} as:
\begin{eqnarray*}
\left( T_{\lambda}f\right) \left( x\right) &:=&\left( T_{\lambda,1}f\right)(x)=\int_{-\infty }^{\infty } \kappa_\lambda(x,\nu)f(\nu)d\nu\notag\\
&=&\frac{1}{\left(
1+x^{2}\right) ^{\lambda/2}}\frac{2^{\lambda-2}\lambda}{\pi \Gamma \left( \lambda\right) }\int_{-\infty }^{\infty }%
\left\vert \Gamma \left( \lambda\frac{1+i\nu}{2}\right) \right\vert ^{2}
e^{\lambda \nu\arctan x}f(\nu)d\nu.
\end{eqnarray*}
This operator preserves constant as well as linear functions. In the last four decades, researchers have investigated several exponential-type operators, established the generalization of these operators and examined their approximation properties; we list a few of them here, \cite{AB,braha,agvg,Monika,kajla,TW}.

\section{Some auxiliary results}
\begin{proposition}\label{OP1}
The m.g.f. of the generalized Ismail-May operator is given by
\begin{eqnarray*}
\left(T_{\lambda, a}e^{\theta \nu}\right) \left( x\right) &=&\left( \cos \frac{a \theta}{\lambda} -\frac{x}{a} \sin \frac{a \theta}{\lambda}\right)^{-\lambda}.
\end{eqnarray*}
\end{proposition}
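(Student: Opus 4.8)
The plan is to feed $f(\nu)=e^{\theta\nu}$ directly into the integral representation \eqref{E1} and reduce the resulting integral to the $\sec^{\lambda}$ identity of Ismail and May that was already invoked in constructing $B_T^a$. First I would write
\[
\left(T_{\lambda,a}e^{\theta\nu}\right)(x)=\frac{2^{\lambda-2}\lambda}{\pi\Gamma(\lambda)}\frac{a^{\lambda-1}}{(a^2+x^2)^{\lambda/2}}\int_{-\infty}^{\infty}\left|\Gamma\!\left(\frac{\lambda a+i\lambda\nu}{2a}\right)\right|^{2}e^{\frac{\lambda\nu}{a}\arctan\frac{x}{a}}\,e^{\theta\nu}\,d\nu,
\]
and then substitute $u=\lambda\nu/a$, so that the Gamma factor collapses to $\left|\Gamma\!\left(\tfrac{\lambda+iu}{2}\right)\right|^{2}$ and the two exponentials merge into a single $e^{su}$ with
\[
s=\arctan\frac{x}{a}+\frac{a\theta}{\lambda}.
\]

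Next I would apply the integral representation $(\sec s)^{\lambda}=\frac{2^{\lambda-2}}{\pi\Gamma(\lambda)}\int_{-\infty}^{\infty}\left|\Gamma(\tfrac{\lambda+iu}{2})\right|^{2}e^{su}\,du$ quoted before \eqref{E1}. This replaces the whole integral by $(\sec s)^{\lambda}$ up to the Jacobian $a/\lambda$, and after cancelling the prefactor the expression reduces cleanly to
\[
\left(T_{\lambda,a}e^{\theta\nu}\right)(x)=\frac{a^{\lambda}}{(a^2+x^2)^{\lambda/2}}\,(\cos s)^{-\lambda}.
\]

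To finish I would expand $\cos s$ by the angle-addition formula, using $\cos\!\left(\arctan\frac{x}{a}\right)=\frac{a}{\sqrt{a^2+x^2}}$ and $\sin\!\left(\arctan\frac{x}{a}\right)=\frac{x}{\sqrt{a^2+x^2}}$, which gives
\[
\cos s=\frac{1}{\sqrt{a^2+x^2}}\left(a\cos\frac{a\theta}{\lambda}-x\sin\frac{a\theta}{\lambda}\right).
\]
Substituting this back, the factor $(a^2+x^2)^{\lambda/2}$ cancels against the one produced by $(\cos s)^{-\lambda}$, and pulling $a$ out of the bracket leaves exactly $\left(\cos\frac{a\theta}{\lambda}-\frac{x}{a}\sin\frac{a\theta}{\lambda}\right)^{-\lambda}$, as claimed.

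The computation itself is mechanical; the one point that needs care is the range of validity. The Ismail--May identity requires $\operatorname{Re} s\in\left(-\tfrac{\pi}{2},\tfrac{\pi}{2}\right)$, and since $\arctan\frac{x}{a}$ already lies in that open interval, this forces a restriction on $\theta$ (the m.g.f. exists only for $\theta$ small enough that $\arctan\frac{x}{a}+\frac{a\theta}{\lambda}$ stays inside the strip). I expect verifying this admissibility condition, and hence the absolute convergence of the integral before the lemma is applied, to be the only genuine obstacle; everything else is substitution and trigonometric simplification.
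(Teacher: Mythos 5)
Your proposal is correct and coincides with the paper's own ``Approach 1 (Direct Method)'': the same substitution $q=\lambda\nu/a$, the same reduction to the Ismail--May identity with $s=\arctan\frac{x}{a}+\frac{a\theta}{\lambda}$, and the same trigonometric simplification at the end (the paper additionally offers a second derivation via the method of characteristics, which you do not need). Your closing remark about the admissibility condition $\operatorname{Re}\, s\in\left(-\frac{\pi}{2},\frac{\pi}{2}\right)$ restricting $\theta$ is a point the paper silently omits, and it is worth keeping.
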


\begin{proof}Here, we employ two methods to determine the moment generating function of the specified operator. One is based on the direct technique while the other adopts Ismail-May\cite{isma} strategy.\\

\textbf{Approach 1 (Direct Method):}
By definition (\ref{E1}) and using \cite[Lemma 3.3]{isma} in the subsequent steps, we have
\begin{eqnarray*}
\left(T_{\lambda, a}e^{\theta\nu}\right) \left( x\right) &=&\frac{2^{\lambda-2}\lambda}{\pi \Gamma \left( \lambda\right)}\frac{a^{\lambda-1}}{\left(
a^2+x^{2}\right) ^{\lambda/2}}\int_{-\infty }^{\infty } \left\vert\Gamma \left(\frac{\lambda a+i \lambda \nu}{2a}\right) \right\vert ^{2}
e^{\frac{\lambda \nu}{a}(\arctan \frac{x}{a}+\frac{a \theta}{\lambda})}d\nu\\
&=&\frac{2^{\lambda-2}}{\pi \Gamma \left( \lambda\right)}\frac{a^{\lambda}}{\left(
a^2+x^{2}\right) ^{\lambda/2}}\int_{-\infty }^{\infty } \left\vert \Gamma\left(\frac{\lambda +i q}{2}\right) \right\vert ^{2}
e^{q(\arctan \frac{x}{a}+\frac{a \theta}{\lambda})}dq.\\
\end{eqnarray*}
Putting $s=\arctan\left( \frac{x}{a}\right)+\frac{a \theta}{\lambda} $ and using the identity
\begin{equation*}
\frac{%
2^{\lambda -2}}{\pi \Gamma \left( \lambda \right) }\int_{-\infty }^{\infty }\left\vert \Gamma \left( \frac{\lambda +i q}{2}%
\right) \right\vert ^{2}e^{sq}dq=\sec ^{\lambda }s,
\end{equation*}
we get
\begin{eqnarray*}
\left(T_{\lambda, a}e^{\theta\nu}\right) \left( x\right)
&=&\frac{a^{\lambda}}{\left(
a^2+x^{2}\right) ^{\lambda/2}}\left(\cos \left(\arctan \frac{x}{a}+\frac{a \theta}{\lambda}\right)\right)^{-\lambda}\\
&=&\left( \cos \frac{a \theta}{\lambda} -\frac{x}{a} \sin \frac{a \theta}{\lambda}\right)^{-\lambda}.
\end{eqnarray*}

\textbf{Approach 2 (Ismail-May \cite{isma} Method):}
 Denoting $\mu_{\lambda,p}^{T}(x):=\left(T_{\lambda, a}(\nu-x)^p\right) \left( x \right)$, $p=0,1,2,...$ and using eq. (\ref{ENO1}), we obtain the following recurrence relation:
\begin{eqnarray}\label{ENO2}
\frac{\lambda \hspace{2pt} \mu_{\lambda,p+1}^{T}(x)}{a^2+x^2}&=&p \hspace{2pt} \mu_{\lambda,p-1}^{T}(x)+\left[ \mu_{\lambda,p}^T(x)\right]^{\prime}.
\end{eqnarray}
Let
\begin{eqnarray}\label{ENO3}
H(x, \theta)=\sum_{p=0}^{\infty}\mu_{\lambda,p}^{T}(x)\frac{\theta^p}{p!}.
\end{eqnarray}
Multiplying both sides of (\ref{ENO2}) by $\frac{\theta^p}{p!}$ and taking sum over $p \in \mathbb{N} \cup \{ 0\}$, we get\\
$$\frac{\lambda}{a^2+x^2} \cdot \frac{\partial}{\partial \theta}H(x,\theta)=\theta H(x,\theta)+\frac{\partial}{\partial x}H(x,\theta).$$
By the method of characteristics, we solve the following system:
\begin{eqnarray}\label{ENO4}
\frac{a^2+x^2}{\lambda}d\theta=\frac{dx}{-1}=\frac{d H}{\theta H}.
\end{eqnarray}
Solving $1st$ and $2nd$ fraction of (\ref{ENO4}), we get
\begin{eqnarray}\label{ENO5}
\theta&=&\frac{-\lambda}{a} \arctan \frac{x}{a}+C_1.
\end{eqnarray}
Now, considering $2nd$ and $3rd$ fraction of (\ref{ENO4}), we have
\begin{eqnarray*}
\frac{dH}{H}=-\theta dx=\left(\frac{\lambda}{a} \arctan \frac{x}{a}-C_1\right) dx,
\end{eqnarray*}
implying
\begin{eqnarray}\label{ENO6}
\ln H&=&\left(\frac{\lambda}{a}  \left[x\arctan \frac{x}{a}-\frac{a}{2} \ln(a^2+x^2)\right]-C_1x\right)+\ln C_2\nonumber \\
\Rightarrow \ln H &=& \left(\frac{\lambda x}{a}  \arctan \frac{x}{a}-\frac{\lambda}{2} \ln(a^2+x^2)-C_1x\right)+\ln C_2 \nonumber \\
\Rightarrow H &=& C_2 (a^2+x^2)^{-\lambda/2} \exp\left(\frac{\lambda x}{a}  \arctan \frac{x}{a}-C_1 x \right).
\end{eqnarray}
Now, using the value of $C_1$ from (\ref{ENO5}) in (\ref{ENO6}), we obtain
\begin{eqnarray}\label{ENO7}
H \cdot e^{\theta x}\cdot (a^2+x^2)^{\lambda/2}&=&C_2.
\end{eqnarray}
Using (\ref{ENO5}) and (\ref{ENO7}), the general solution of the system (\ref{ENO4}) is given by
\begin{eqnarray*}
H \cdot e^{\theta x}\cdot (a^2+x^2)^{\lambda/2}&=&\phi\left(\frac{a \theta}{\lambda}+\arctan \frac{x}{a}\right),
\end{eqnarray*}
where $\phi$ is an arbitrary function of $\theta$ and $x$.
Utilizing $H(x,0)=1$ (from (\ref{ENO3})) to find particular solution of the system, we get
\begin{eqnarray*}
\phi\left(\arctan \frac{x}{a}\right)&=&(a^2+x^2)^{\lambda/2}.
\end{eqnarray*}
Letting $\arctan \frac{x}{a}=s$, we get
$$\phi(s)=a^\lambda sec^\lambda s.$$
Thus
\begin{eqnarray*}
H(x, \theta)&=&e^{-\theta x} (a^2+x^2)^{-\lambda/2}\cdot a^{\lambda}\cdot\left[sec\left(\frac{a \theta}{\lambda}+\arctan\frac{x}{a} \right)\right]^\lambda\\
&=&e^{-\theta x}\left(cos\frac{a \theta}{\lambda}-\frac{x}{a} sin \frac{a \theta}{\lambda}\right)^{-\lambda}.
\end{eqnarray*}
Finally, following \cite[Proposition 2.5]{isma}, we have
\begin{eqnarray*}
\left( T_{\lambda,a}e^{\theta \nu}\right)(x)&=&\left(cos\frac{a \theta}{\lambda}-\frac{x}{a} sin \frac{a \theta}{\lambda}\right)^{-\lambda}.
\end{eqnarray*}
 \end{proof}
This concludes the proposition's proof.

\begin{remark}\label{R1} The  moments  $\left(T_{\lambda, a} e_p\right) \left( x\right), e_p=x^p, p=0,1,2,...$ satisfy the representation
\begin{eqnarray*}\left(T_{\lambda, a} e_p\right) \left( x\right)&=& \left(\frac{\partial^p}{\partial \theta^p} \left( \cos \frac{a \theta}{\lambda} -\frac{x}{a} \sin \frac{a \theta}{\lambda}\right)^{-\lambda}\right)_{\theta=0}.
\end{eqnarray*}
For certain non-zero constants $\beta_i$, $i=0,1,2,...$ and by simple computations, we have
\begin{eqnarray*}
		\sum_{i \ge 0}\beta_i (T_{\lambda, a}e_i)(x)&=&\beta_0+\beta_1x+\beta_2\left[x^2+\frac{a^2+x^2}{\lambda}\right] \\
		&&+\beta_3\left[x^3+\frac{(3\lambda +2)x (a^2+x^2)}{\lambda^2}\right]+\beta_4\left[x^4+\frac{(a^2+x^2)}{\lambda^3}\biggl( \left(6 \lambda^2+11 \lambda+6\right)x^2+\left(3 \lambda+2\right)a^2\biggr)\right]\\
&&+\beta_5\left[x^5+\frac{x (a^2+x^2)}{\lambda^4}\biggl(\left(10\lambda^3+35\lambda^2+50\lambda+24\right)x^2+\left(15\lambda^2+30 \lambda+16\right)a^2 \biggr)\right]\\
  &&+\beta_6\biggl[x^6+\frac{(a^2+x^2)}{\lambda^5}\biggl((15\lambda^4+85\lambda^3+225\lambda^2+274\lambda+120)x^4+a^2(45\lambda^3+180\lambda^2\\&&+256\lambda+120)x^2+a^4(15\lambda^2+30\lambda+16)\biggr)\biggr]+....
		\end{eqnarray*}
Also, by Newton's interpolation formula, we find the coefficients as follows:
\begin{eqnarray*}
		(T_{\lambda, a} e_p)(x)&=&x^p+\biggl[ \frac{p(p-1)}{2 \lambda}+\frac{p(p-1)(p-2)(3p-1)}{24\lambda^2}\biggr]x^p\\&&+a^2\biggl[ \frac{p(p-1)}{2 \lambda}+\frac{p(p-1)(p-2)(3p-5)}{12 \lambda^2}\biggr]x^{p-2}\\&&+a^4\biggl[\frac{p(p-1)(p-2)(p-3)}{8 \lambda^2} \biggr]x^{p-4}+O\left(\frac{1}{\lambda^3}\right).
		\end{eqnarray*}
		\end{remark}

\begin{remark}\label{R6}The following equality holds true:
\begin{eqnarray*}\mu_{\lambda,p}^{T}(x)&=& \left(\frac{\partial^p}{\partial \theta^p} \left(\left( \cos \frac{a \theta}{\lambda} -\frac{x}{a} \sin \frac{a \theta}{\lambda}\right)^{-\lambda}e^{-\theta x}\right)\right)_{\theta=0}.
\end{eqnarray*}
In simple calculations, few moments for some non-zero constants $c_i$, $i=0,1,2,...$ are given by
\begin{eqnarray*}
\sum_{i \ge 0}c_i (\mu_{\lambda,i}^T)(x)&=&c_0+c_2\left[\frac{a^2+x^2}{\lambda}\right]+c_3\left[ \frac{2x(a^2+x^2)}{\lambda^2}\right]\\&&+c_4 \left[\frac{(a^2+x^2)\left[3(\lambda+2)x^2+(3\lambda+2)a^2\right]}{\lambda^3}\right] \\&&+c_5\left[\frac{4x(a^2+x^2)\left[(5\lambda+6)x^2+(5 \lambda+4)a^2 \right]}{\lambda^4}\right]\\&&+c_6\left[\frac{(a^2+x^2)\left[5(3 \lambda^2+26 \lambda +24)x^4+10 a^2(3\lambda^2+16 \lambda+12)x^2+a^4(15 \lambda^2+30 \lambda+16)\right]}{\lambda^5} \right].
\end{eqnarray*}
\end{remark}

\begin{corollary}\label{c1} If $\delta \in \mathbb{R}^+$ and $[a,b]\subset \mathbb{R}$ then for any positive $s$, we have
$$\sup_{x\in [a,b]} \left|\displaystyle \int_{|\nu-x|\ge \delta} \kappa^{a}_{\lambda}(x,\nu)e^{N|\nu|} d\nu\right|=O(\lambda^{-s}).$$
\end{corollary}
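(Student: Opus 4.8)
The plan is to reduce the estimate to a bound on a high-order central moment of the operator, whose decay is already encoded in Remark~\ref{R6}. Since the kernel is nonnegative and $\left(T_{\lambda,a}1\right)(x)=1$, the measure $\kappa^a_\lambda(x,\nu)\,d\nu$ is a probability measure, so I may apply the Cauchy--Schwarz inequality with respect to it. This decouples the tail (which must be made small) from the exponential factor (which must be bounded uniformly):
\begin{equation*}
\int_{|\nu-x|\ge\delta}\kappa^a_\lambda(x,\nu)\,e^{N|\nu|}\,d\nu
\le\Bigl(\int_{|\nu-x|\ge\delta}\kappa^a_\lambda(x,\nu)\,d\nu\Bigr)^{1/2}
\Bigl(\int_{-\infty}^{\infty}\kappa^a_\lambda(x,\nu)\,e^{2N|\nu|}\,d\nu\Bigr)^{1/2}.
\end{equation*}

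For the second factor I would use Proposition~\ref{OP1}. Bounding $e^{2N|\nu|}\le e^{2N\nu}+e^{-2N\nu}$ gives $\int\kappa^a_\lambda(x,\nu)e^{2N|\nu|}\,d\nu\le\left(T_{\lambda,a}e^{2N\nu}\right)(x)+\left(T_{\lambda,a}e^{-2N\nu}\right)(x)$, and by Proposition~\ref{OP1} these equal $\bigl(\cos\tfrac{2Na}{\lambda}\mp\tfrac{x}{a}\sin\tfrac{2Na}{\lambda}\bigr)^{-\lambda}$. For all $\lambda$ large enough that $\tfrac{2Na}{\lambda}\in(-\tfrac{\pi}{2},\tfrac{\pi}{2})$ each base is positive and close to $1$, and the standard limit $\bigl(1-\tfrac{2Nx}{\lambda}\bigr)^{-\lambda}\to e^{2Nx}$ shows both terms tend to $e^{\pm2Nx}$ as $\lambda\to\infty$; hence the second factor is $O(1)$, uniformly for $x\in[a,b]$.

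For the first factor I would exploit that on the region of integration $1\le(\nu-x)^{2m}/\delta^{2m}$ for every positive integer $m$, so that
\begin{equation*}
\int_{|\nu-x|\ge\delta}\kappa^a_\lambda(x,\nu)\,d\nu
\le\frac{1}{\delta^{2m}}\int_{-\infty}^{\infty}(\nu-x)^{2m}\kappa^a_\lambda(x,\nu)\,d\nu
=\frac{\mu^{T}_{\lambda,2m}(x)}{\delta^{2m}}.
\end{equation*}
Combining the two factors yields a bound of order $\bigl(\mu^{T}_{\lambda,2m}(x)\bigr)^{1/2}$, uniformly in $x\in[a,b]$. Given $s>0$, it then suffices to know $\mu^{T}_{\lambda,2m}(x)=O(\lambda^{-m})$ uniformly on $[a,b]$ and to pick any integer $m\ge 2s$, which forces the whole expression to be $O(\lambda^{-m/2})=O(\lambda^{-s})$.

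The main obstacle is therefore the uniform central-moment estimate $\mu^{T}_{\lambda,2m}(x)=O(\lambda^{-m})$ for all $m$, rather than merely the low-order cases displayed in Remark~\ref{R6}. I would establish it by induction on $p$ using the recurrence~(\ref{ENO2}): since $\mu^{T}_{\lambda,0}=1$, $\mu^{T}_{\lambda,1}=0$, and $\lambda\,\mu^{T}_{\lambda,p+1}=(a^2+x^2)\bigl(p\,\mu^{T}_{\lambda,p-1}+[\mu^{T}_{\lambda,p}]'\bigr)$, each step multiplies by $\lambda^{-1}$ while differentiation in $x$ preserves the $\lambda$-order, giving $\mu^{T}_{\lambda,2m}=O(\lambda^{-m})$ and $\mu^{T}_{\lambda,2m+1}=O(\lambda^{-m-1})$. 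Uniformity over the compact interval follows because every moment is a polynomial in $x$ with coefficients that are $a^2+x^2$ times bounded rational functions of $\lambda$; hence $a^2+x^2$ and all coefficients are bounded on $[a,b]$, making every $O$-constant independent of $x$.
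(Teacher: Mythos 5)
The paper states this corollary without any proof at all, so there is nothing to compare your argument against; what you have written is in effect the missing proof, and it is correct. Your route (Cauchy--Schwarz against the probability measure $\kappa^a_\lambda(x,\nu)\,d\nu$, a Chebyshev-type bound $\int_{|\nu-x|\ge\delta}\kappa^a_\lambda\,d\nu\le\delta^{-2m}\mu^T_{\lambda,2m}(x)$ for the tail mass, and the moment-generating function of Proposition~\ref{OP1} to control $\int\kappa^a_\lambda e^{2N|\nu|}\,d\nu$) is exactly the Ismail--May-style argument that the authors implicitly rely on, and indeed mirrors the way the paper itself later splits the term $E_{22}$. Your inductive moment estimate $\mu^T_{\lambda,2m}(x)=O(\lambda^{-m})$, $\mu^T_{\lambda,2m+1}(x)=O(\lambda^{-m-1})$ from the recurrence~(\ref{ENO2}) is sound: each $\mu^T_{\lambda,p}$ is a polynomial in $x$ with coefficients rational in $\lambda$, so differentiation in $x$ cannot raise the $\lambda$-order, and the displayed moments in Remark~\ref{R6} confirm the pattern through $p=6$. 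Two small points you should make explicit. First, for the second Cauchy--Schwarz factor the relevant condition is not merely $\tfrac{2Na}{\lambda}\in(-\tfrac{\pi}{2},\tfrac{\pi}{2})$ but that $\arctan\tfrac{x}{a}+\tfrac{2Na}{\lambda}$ stays inside $(-\tfrac{\pi}{2},\tfrac{\pi}{2})$, since that is where the integral defining the m.g.f.\ converges; on a compact interval $\arctan\tfrac{x}{a}$ is bounded away from $\pm\tfrac{\pi}{2}$, so this holds for all sufficiently large $\lambda$, uniformly in $x$. Second, the choice $m\ge 2s$ should be recorded as depending on $s$ before any constants are fixed, so that the resulting $O$-constant depends only on $s$, $N$, $\delta$ and the interval. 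With those clarifications the proof is complete.
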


\begin{lemma}\label{last} For the kernel $\kappa_\lambda^a (x,\nu)$, we have
$$[a^2+x^{2}]^p\frac{\partial^p}{\partial x^p}[\kappa_{\lambda}^{a}(x,\nu)]=\sum_{{2i+j\le p}\atop{i,j\ge 0}}\lambda^{i+j}(\nu-x)^j\Game^{a}_{i,j,p}(x)[\kappa_\lambda^a (x,\nu)],$$
where the polynomials $\Game_{i,j,p}^{a}(x)$ are free from $\lambda$ and $ \nu.$

\end{lemma}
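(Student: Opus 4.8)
The plan is to argue by induction on $p$, using the defining equation (\ref{ENO1}) to trade every $x$-derivative of the kernel for a factor $\lambda(\nu-x)/(a^2+x^2)$. Throughout I would write $g(x):=a^2+x^2$, abbreviate $\kappa:=\kappa_\lambda^a(x,\nu)$, and set $S_p:=g^p\,\partial_x^p\kappa$, so that the assertion is precisely that $S_p=\sum_{2i+j\le p}\lambda^{i+j}(\nu-x)^j\,\Game_{i,j,p}^a(x)\,\kappa$ with each $\Game_{i,j,p}^a$ a polynomial in $x$ free of $\lambda$ and $\nu$. The base case $p=0$ holds with $\Game_{0,0,0}^a\equiv 1$, and multiplying (\ref{ENO1}) through by $g$ gives the case $p=1$ with $\Game_{0,1,1}^a\equiv 1$ and $\Game_{0,0,1}^a\equiv 0$.

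The engine of the induction is a single differentiation identity. Since $g'=2x$, differentiating $S_p=g^p\,\partial_x^p\kappa$ yields $\partial_x S_p=2px\,g^{p-1}\partial_x^p\kappa+g^p\,\partial_x^{p+1}\kappa$, so that
\begin{equation*}
S_{p+1}=g^{p+1}\,\partial_x^{p+1}\kappa=g\,\partial_x S_p-2px\,S_p.
\end{equation*}
Hence it suffices to differentiate the inductive expression for $S_p$, multiply by $g$, and subtract $2px\,S_p$. Applying the product rule to a single summand $\lambda^{i+j}(\nu-x)^j\,\Game_{i,j,p}^a\,\kappa$ and substituting $\partial_x\kappa=\lambda(\nu-x)g^{-1}\kappa$ from (\ref{ENO1}) produces exactly four contributions: differentiating $(\nu-x)^j$ (then multiplying by $g$) lowers the $(\nu-x)$-power and moves the index to $(i+1,j-1)$; differentiating $\Game_{i,j,p}^a$ keeps the index $(i,j)$ and attaches $g\,(\Game_{i,j,p}^a)'$; differentiating $\kappa$ raises both the $\lambda$- and $(\nu-x)$-powers by one, moving the index to $(i,j+1)$; and the $-2px\,S_p$ term keeps $(i,j)$ with polynomial $-2px\,\Game_{i,j,p}^a$. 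Collecting these yields the explicit recursion
\begin{equation*}
\Game_{i,j,p+1}^a=-(j+1)\,g\,\Game_{i-1,j+1,p}^a+g\,(\Game_{i,j,p}^a)'+\Game_{i,j-1,p}^a-2px\,\Game_{i,j,p}^a,
\end{equation*}
with the convention that any $\Game$ carrying a negative or inadmissible subscript is zero. Because $g$, $g'$, $x$, and the level-$p$ polynomials are all free of $\lambda$ and $\nu$, so is $\Game_{i,j,p+1}^a$, which closes the inductive claim on the coefficients.

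The one step demanding genuine care — and the main bookkeeping obstacle — is verifying that the four target indices still obey the constraint $2i+j\le p+1$, so that no term escapes the admissible support. Starting from a source index with $2i+j\le p$, the targets $(i+1,j-1)$, $(i,j)$, $(i,j+1)$, $(i,j)$ have weighted sums $2i+j+1$, $2i+j$, $2i+j+1$, $2i+j$ respectively, each at most $p+1$; moreover the map $(i,j)\mapsto(i+1,j-1)$ only fires when $j\ge 1$, and $(i,j)\mapsto(i,j-1)$ in the recursion requires $j\ge 1$, so the indices stay in the correct ranges. This confirms that $g\,\partial_x S_p-2px\,S_p$ is supported exactly on the level-$(p+1)$ index set, completing the induction and hence the proof of the lemma.
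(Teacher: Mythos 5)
Your proof is correct and takes essentially the same route as the paper's: induction on $p$, the same four-term product-rule expansion, and the identical coefficient recursion $\Game^{a}_{i,j,p+1}=-(j+1)(a^2+x^2)\Game^{a}_{i-1,j+1,p}+(a^2+x^2)\left[\Game^{a}_{i,j,p}\right]'+\Game^{a}_{i,j-1,p}-2px\,\Game^{a}_{i,j,p}$ with the same vanishing convention and the same index bookkeeping. The only cosmetic difference is that you run the induction on $S_p=(a^2+x^2)^p\partial_x^p\kappa$ using equation (\ref{ENO1}) abstractly, whereas the paper substitutes the explicit form $(a^2+x^2)^{-\lambda/2}e^{\frac{\lambda\nu}{a}\arctan\frac{x}{a}}$ and tracks the exponent $-\lambda/2-p$; the two computations are term-for-term equivalent.
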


\begin{proof} To defend the assertion, it suffices to demonstrate:
 $$\frac{\partial^p}{\partial x^p}\left[\left(
a^2+x^{2}\right) ^{-\lambda/2}e^{\frac{\lambda \nu}{a}\arctan \frac{x}{a}}\right]=\sum_{{2i+j\le p}\atop{i,j\ge 0}}\lambda^{i+j}(\nu-x)^j \Game^{a}_{i,j,p}(x)\left[\left(
a^2+x^{2}\right) ^{-\lambda/2-p}e^{\frac{\lambda \nu}{a}\arctan \frac{x}{a}}\right].$$
We shall use the induction principle to show the result. For $ p=1$ obviously, we have
 $$\frac{\partial}{\partial x}\left[\left(
a^2+x^{2}\right) ^{-\lambda/2}e^{\frac{\lambda \nu}{a}\arctan \frac{x}{a}}\right]=\lambda (\nu-x)[a^2+x^2]^{-\lambda/2-1}e^{\frac{\lambda \nu}{a}\arctan \frac{x}{a}}.$$
 Assume the result is correct for $p$, then
 \begin{eqnarray*}
&& \frac{\partial^{p+1}}{\partial x^{p+1}}\left[\left(
a^2+x^{2}\right) ^{-\lambda/2}e^{\frac{\lambda \nu}{a}\arctan \frac{x}{a}}\right]\\
  &=&\frac{\partial}{\partial x}\biggl[ \sum_{{2i+j\le p}\atop{i,j\ge 0}}\lambda^{i+j}(\nu-x)^j\Game^{a}_{i,j,p}(x)\left(
a^2+x^{2}\right) ^{-\lambda/2-p}e^{\frac{\lambda \nu}{a}\arctan \frac{x}{a}}\biggr]\\
&=& \sum_{{2i+j\le p}\atop{i\ge 0,j\ge 1}}\lambda^{i+j}(\nu-x)^{j-1}(-j\Game^{a}_{i,j,p}(x))\left(
a^2+x^{2}\right) ^{-\lambda/2-p}e^{\frac{\lambda \nu}{a}\arctan \frac{x}{a}}\\
&&+ \sum_{{2i+j\le p}\atop{i,j\ge 0}}\lambda^{i+j}(\nu-x)^j\left[\Game^{a}_{i,j,p}(x)\right]^{\prime}\left(
a^2+x^{2}\right) ^{-\lambda/2-p}e^{\frac{\lambda \nu}{a}\arctan \frac{x}{a}}\\
&&+ \sum_{{2i+j\le p}\atop{i,j\ge 0}}\lambda^{i+j}(\nu-x)^{j}(-2px) \Game^{a}_{i,j,p}(x)\left(
a^2+x^{2}\right) ^{-\lambda/2-p-1}e^{\frac{\lambda \nu}{a}\arctan \frac{x}{a}}\\
&&+ \sum_{{2i+j\le p}\atop{i,j\ge 0}}\lambda^{i+j+1}(\nu-x)^{1+j}\Game^{a}_{i,j,p}(x)\left(
a^2+x^{2}\right) ^{-\lambda/2-p-1}e^{\frac{\lambda \nu}{a}\arctan \frac{x}{a}}.
\end{eqnarray*}
Thus we have
 \begin{eqnarray*}
&& \frac{\partial^{p+1}}{\partial x^{p+1}}\left[\left(
a^2+x^{2}\right) ^{-\lambda/2}e^{\frac{\lambda \nu}{a}\arctan \frac{x}{a}}\right]\\
&=& \sum_{{2(i-1)+(j+1)\le p}\atop{i\ge 1,j\ge 0}}-\lambda^{i+j}(j+1)(\nu-x)^{j}\Game^{a}_{i-1,j+1,p}(x)\left(
a^2+x^{2}\right) ^{-\lambda/2-p}e^{\frac{\lambda \nu}{a}\arctan \frac{x}{a}}\\
&&+ \sum_{{2i+j\le p}\atop{i,j\ge 0}}\lambda^{i+j}(\nu-x)^j\left[\Game^a_{i,j,p}(x)\right]^{\prime}\left(
a^2+x^{2}\right) ^{-\lambda/2-p}e^{\frac{\lambda \nu}{a}\arctan \frac{x}{a}}\\
&&+ \sum_{{2i+j\le p}\atop{i,j\ge 0}}\lambda^{i+j}(\nu-x)^{j}(-2px)\Game^{a}_{i,j,p}(x)\left(
a^2+x^{2}\right) ^{-\lambda/2-p-1}e^{\frac{\lambda \nu}{a}\arctan \frac{x}{a}}\\
&&+ \sum_{{2i+(j-1)\le p}\atop{i,j-1\ge 0}}\lambda^{i+j}(\nu-x)^{j}\Game^{a}_{i,j-1,p}(x)\left(
a^2+x^{2}\right) ^{-\lambda/2-1-p}e^{\frac{\lambda \nu}{a}\arctan \frac{x}{a}},
\end{eqnarray*}
where
\begin{eqnarray*}
\Game^{a}_{i,j,p+1}(x)&=&-(j+1)(a^2+x^2)\Game^{a}_{i-1,j+1,p}(x)\\&&-2px \Game^{a}_{i,j,p}(x)+(a^2+x^2)\left[\Game^a_{i,j,p}(x)\right]^{\prime}+\Game^{a}_{i,j-1,p}(x),
\end{eqnarray*}
with $j+2i-1\le p$ and $i,j\ge 0$ along with the convention $\Game^{a}_{i,j,p}(x)=0,$ if any one of the restraints is breached. Therefore, the outcome is valid for $p+1.$ This concludes the lemma's proof.
\end{proof}

\begin{theorem}
For $f \in C_B(\mathbb{R})$ (the class of all bounded and continuous functions on the whole real axis) and $m \ge 1$, the following limits hold true:
\begin{eqnarray*}
\displaystyle \lim_{\lambda \rightarrow \infty}\left(T_{\lambda,a}f(\nu)\right)(x)&=&f(x),\\
\displaystyle \lim_{\lambda \rightarrow \infty}\left(T_{m,a} f\left(\frac{\nu}{\lambda}\right)\right)(\lambda x)&=&\left(P_mf\right)(x).
\end{eqnarray*}
\end{theorem}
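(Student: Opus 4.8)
The plan is to handle the two limits separately, as they are different in character: the first asserts that the positive linear operators $T_{\lambda,a}$ form an approximation process as $\lambda\to\infty$, while the second is a rescaling statement that recovers the Post–Widder operator $P_m$ from a dilated family. For the second limit I would mirror, almost verbatim, the computation in particular case (i) that produced $P_\lambda$ as the limit $a\to 0$; the only structural change is that here the large-argument regime of the Gamma factor is reached by sending $\lambda\to\infty$ rather than $a\to 0$.

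For the first limit I would use the localization already prepared in Corollary \ref{c1}. Proposition \ref{OP1} at $\theta=0$ gives $(T_{\lambda,a}e_0)(x)=1$, so the operator reproduces constants and
\begin{equation*}
\left|(T_{\lambda,a}f)(x)-f(x)\right|\le \int_{-\infty}^{\infty}\kappa_\lambda^a(x,\nu)\,|f(\nu)-f(x)|\,d\nu.
\end{equation*}
Fix $x$ and $\varepsilon>0$; by continuity choose $\delta>0$ with $|f(\nu)-f(x)|<\varepsilon$ whenever $|\nu-x|<\delta$. Splitting the integral at $|\nu-x|=\delta$, the inner part is at most $\varepsilon$ (again by normalization), while the outer part is bounded by $2\|f\|_\infty\int_{|\nu-x|\ge\delta}\kappa_\lambda^a(x,\nu)\,d\nu$, which is $O(\lambda^{-s})$ by Corollary \ref{c1} applied with $N=0$. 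Letting $\lambda\to\infty$ and then $\varepsilon\to 0$ closes the first limit. (Alternatively one may invoke the second central moment $\mu_{\lambda,2}^{T}(x)=(a^2+x^2)/\lambda\to0$ read off from Remark \ref{R6} together with a Chebyshev-type estimate.)

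For the second limit I would substitute $\nu=\lambda u$ to write
\begin{equation*}
\left(T_{m,a}f\!\left(\tfrac{\nu}{\lambda}\right)\right)(\lambda x)=\int_{-\infty}^{\infty}W_\lambda(x,u)\,f(u)\,du,\qquad W_\lambda(x,u):=\lambda\,\kappa_m^a(\lambda x,\lambda u).
\end{equation*}
Since $\kappa_m^a\ge0$ and $T_{m,a}$ preserves constants, each $W_\lambda(x,\cdot)$ is a probability density in $u$. Inserting the kernel from \eqref{E1}, the Gamma argument is $\tfrac{m}{2}+i\tfrac{m\lambda u}{2a}$, whose imaginary part tends to $\infty$ for every fixed $u\ne0$; applying \eqref{EN1} and the arctangent identity \eqref{EEN1} exactly as in case (i), the prefactor $a^{m-1}(a^2+\lambda^2x^2)^{-m/2}$ combines with the constants and the factor $\lambda$ to produce $m^m/(\Gamma(m)x^m)$ in the limit, the contribution of $u<0$ is annihilated by an exponential of the form $e^{\pi m\lambda u/a}$, and for $u>0$ the surviving exponent $-\tfrac{m\lambda u}{a}\arctan\tfrac{a}{\lambda x}$ tends to $-mu/x$ by the same limit $\lim_{b\to0}\tfrac1b\arctan\tfrac bx=\tfrac1x$ used in case (i), now applied with $b=a/\lambda\to0$. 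Hence
\begin{equation*}
W_\lambda(x,u)\longrightarrow W(x,u):=\frac{m^m}{\Gamma(m)\,x^m}\,u^{m-1}e^{-mu/x}\,\mathbf{1}_{\{u>0\}}
\end{equation*}
pointwise for $u\ne0$, and $W(x,\cdot)$ is exactly the Post–Widder density, so $\int W(x,u)\,du=1$. As the densities $W_\lambda(x,\cdot)$ are nonnegative, converge pointwise a.e.\ to $W(x,\cdot)$, and all have total mass $1$, Scheff\'e's lemma upgrades this to $W_\lambda(x,\cdot)\to W(x,\cdot)$ in $L^1(du)$; for bounded $f$ this yields $\big|\int W_\lambda f-\int W f\big|\le\|f\|_\infty\|W_\lambda-W\|_{L^1}\to0$, which is $(P_m f)(x)$. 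As an independent check, Proposition \ref{OP1} gives $(T_{m,a}e^{\theta\nu/\lambda})(\lambda x)=(\cos\tfrac{a\theta}{m\lambda}-\tfrac{\lambda x}{a}\sin\tfrac{a\theta}{m\lambda})^{-m}\to(1-\tfrac{\theta x}{m})^{-m}=(P_m e^{\theta\nu})(x)$.

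The main obstacle is the rigorous passage under the integral sign in the second limit. The asymptotic \eqref{EN1} is valid only in the large-argument regime, so it controls $W_\lambda$ for each fixed $u\ne0$ but fails uniformly near $u=0$, and building an explicit $u$-integrable dominating function is awkward because of the competing powers of $\lambda$ carried by the Gamma power, the $(a^2+\lambda^2x^2)^{-m/2}$ prefactor, and the two exponentials. Invoking Scheff\'e's lemma sidesteps this difficulty entirely: it derives $L^1$-convergence of the densities from pointwise convergence together with the mass-preservation $\int W_\lambda=\int W=1$, after which boundedness of $f$ finishes the argument. The remaining care is purely bookkeeping, namely tracking those three $\lambda$-dependent groupings cleanly so that the stated pointwise limit of $W_\lambda$ emerges as in case (i).
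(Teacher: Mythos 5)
Your argument is correct, but it takes a genuinely different route from the paper. The paper's proof is very short: it uses Proposition \ref{OP1} with $\theta=is$ to compute $\lim_{\lambda\to\infty}(T_{\lambda,a}e^{is\nu})(x)=e^{isx}$ and $\lim_{\lambda\to\infty}(T_{m,a}e^{is\nu/\lambda})(\lambda x)=\bigl(1-\tfrac{isx}{m}\bigr)^{-m}=(P_m e^{is\nu})(x)$, and then invokes \cite[Theorem 1]{acuvgrf}, a L\'evy-continuity-type transfer result which upgrades convergence of the characteristic functions of the underlying probability measures to convergence on all of $C_B(\mathbb{R})$ — exactly the computation you relegate to an ``independent check'' at the end. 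Your proof instead works directly at the kernel level: a standard localization argument via Corollary \ref{c1} with $N=0$ (or Chebyshev with the second central moment) for the first limit, and for the second a change of variables, the pointwise kernel asymptotics mirroring particular case (i), and Scheff\'e's lemma to convert pointwise convergence of the probability densities $W_\lambda(x,\cdot)$ into $L^1$-convergence. The trade-off is clear: the paper's route handles both limits uniformly in two lines but rests entirely on an external black-box theorem, whereas yours is self-contained and makes the convergence mechanism explicit; your use of Scheff\'e is a genuinely good idea, since it cleanly disposes of the domination problem you correctly identify as the main obstacle to passing to the limit under the integral. One minor point of hygiene: the second limit only makes sense for $x>0$ (as the Post--Widder density requires), which your computation implicitly assumes, consistent with case (i) of the paper.
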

\begin{proof}
Following Proposition \ref{OP1}, for each $s,x \in \mathbb{R}$ and $m \ge 1$, we have
\begin{eqnarray*}
\displaystyle \lim_{\lambda \rightarrow \infty} \left(T_{\lambda,a}e^{is\nu}\right)(x)&=&e^{isx},
\end{eqnarray*}
and
\begin{eqnarray*}
\displaystyle \lim_{\lambda \rightarrow \infty}\left(T_{m,a}e^{\frac{i s \nu}{\lambda}}\right)(\lambda x)&=&\displaystyle \lim_{\lambda \rightarrow \infty}\left(cos\left( \frac{i a s}{m \lambda}\right)-\frac{\lambda x}{a} sin\left(\frac{i a s}{m \lambda} \right)\right)^{-m}\\
\\&=& \left( 1-\frac{i s x}{m}\right)^{-m}
= \left(P_{m}e^{i s \nu} \right)(x).
\end{eqnarray*}
Therefore by \cite[Theorem 1]{acuvgrf}, we get the desired results.
\end{proof}

\section{Approximation}
The weighted moduli  (see \cite{Ispir}) is defined by
\begin{equation*}
\Omega \left(f,\hat{\delta} \right) =\sup_{{\left \vert
h\right \vert \leq \hat{\delta} }\atop{x\in {\mathbb{R}}}}
\frac{\left \vert f\left( x+h\right) -f\left( x\right) \right \vert }{\left(
1+h^{2}\right) \left( 1+x^{2}\right) },\mbox{ \  \ }f\in C^{\varrho
}(
%TCIMACRO{\U{211d} }%
%BeginExpansion
{\mathbb{R}}
%EndExpansion
) ,
\end{equation*}%
where $C^{\varrho}(\mathbb{R})$ is the subspace of $C(\mathbb{R})$ having the following properties:
\begin{enumerate}
\item For a strictly increasing function $\sigma \in C(\mathbb{R})$,
\begin{center} $|f(x)| \le \mathcal{M}_f \hspace{1pt}\cdot \varrho(x)$,
\end{center} where $\mathcal{M}_f$ is a constant depending on $f$ only and $\varrho (x)=1+\sigma^2(x)$ such that $\displaystyle \lim _{x \rightarrow \pm \infty}\varrho(x)=\infty$.\\
\item $\displaystyle \lim_{|x| \rightarrow \infty}\frac{f(x)}{\varrho(x)}$ exists and is finite.
\end{enumerate}

\begin{theorem}\label{T1}
Let $T_{\lambda,a }:\mathcal{S}\rightarrow C\left(
{\mathbb{R}}
\right) $, where $\mathcal{S}$ be the class of functions $f$ having polynomial growth. If $%
f$, $f^{\prime \prime}\in C^{\rho}\left(
{\mathbb{R}}
\right) \cap \mathcal{S}$ and $f \in C^{2}(\mathbb{R})$, then for real $x$, we have%
\begin{eqnarray*}
&&\left\vert \left( T_{\lambda, a }f\right) \left( x\right) -f\left(
x\right)-\left( \frac{%
x^2 +a^2}{2\lambda}\right) f^{\prime \prime}\left(
x\right) \right\vert  \\
&\leq &16\left( 1+x^{2}\right) \left( \frac{a^2+x^2}{\lambda %
}\right) \Omega\left( f^{\prime \prime }, \frac{\left[2 \hspace{1pt}\cdot \Xi(a, \lambda, x)\right]^{1/4}}{\lambda}\right) ,
\end{eqnarray*}
where
$$\Xi(a, \lambda, x)=5x^4\left(3\lambda^2+26 \lambda+24\right)+10 a^2 \left(3\lambda^2+16\lambda+12\right)x^2+a^4\left(15\lambda^2+30\lambda+16\right).$$
\end{theorem}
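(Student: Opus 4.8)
The plan is to run a quantitative Voronovskaja argument built on a second‑order Taylor expansion together with the central moments recorded in Remark~\ref{R6}. First I would write, by Taylor's theorem with the Lagrange form of the remainder,
$$f(\nu)=f(x)+f'(x)(\nu-x)+\tfrac12 f''(x)(\nu-x)^2+\tfrac12\bigl(f''(\xi)-f''(x)\bigr)(\nu-x)^2,$$
where $\xi=\xi(\nu,x)$ lies between $\nu$ and $x$. Applying the positive, constant‑preserving operator $T_{\lambda,a}$ and reading off $\mu^{T}_{\lambda,0}(x)=1$, $\mu^{T}_{\lambda,1}(x)=0$, $\mu^{T}_{\lambda,2}(x)=(a^2+x^2)/\lambda$ from Remark~\ref{R6}, the first‑order term vanishes and the second‑order term produces exactly $\tfrac{a^2+x^2}{2\lambda}f''(x)$. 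Hence the quantity to be estimated collapses to
$$\Bigl|\bigl(T_{\lambda,a}f\bigr)(x)-f(x)-\tfrac{a^2+x^2}{2\lambda}f''(x)\Bigr|=\tfrac12\Bigl|\bigl(T_{\lambda,a}\bigl[(f''(\xi)-f''(x))(\nu-x)^2\bigr]\bigr)(x)\Bigr|,$$
and everything now rests on bounding this remainder functional.

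Next I would control $|f''(\xi)-f''(x)|$ through the weighted modulus. Using the standard growth property of $\Omega(\cdot,\hat\delta)$ (see \cite{Ispir}), namely an inequality of the shape $|f''(t)-f''(x)|\le C(1+x^2)(1+(t-x)^2)\bigl(1+(t-x)^2/\hat\delta^2\bigr)\Omega(f'',\hat\delta)$, together with $|\xi-x|\le|\nu-x|$, I can pull $(1+x^2)\Omega(f'',\hat\delta)$ outside the operator and be left with $T_{\lambda,a}$ applied to a polynomial in $(\nu-x)$ of degree six. Since $\kappa^{a}_\lambda\ge 0$ is normalized (so $T_{\lambda,a}$ is positive with $T_{\lambda,a}1=1$), expanding and integrating term by term leaves precisely the four contributions $\mu^{T}_{\lambda,2}$, $\mu^{T}_{\lambda,4}$, $\mu^{T}_{\lambda,4}/\hat\delta^2$, $\mu^{T}_{\lambda,6}/\hat\delta^2$ (should one instead prefer the $1+|t-x|/\hat\delta$ form of the modulus, the resulting odd powers are first converted to even moments via Cauchy--Schwarz, $T_{\lambda,a}(|\nu-x|^{2k+1})\le\sqrt{\mu^{T}_{\lambda,2k}\,\mu^{T}_{\lambda,2k+2}}$).

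The decisive observation is that Remark~\ref{R6} gives $\mu^{T}_{\lambda,6}(x)=\frac{a^2+x^2}{\lambda^5}\,\Xi(a,\lambda,x)$, so the auxiliary quantity $\Xi$ is nothing but a rescaled sixth central moment, and the prescribed radius satisfies $\hat\delta=\frac{[2\Xi]^{1/4}}{\lambda}=\bigl(2\mu^{T}_{\lambda,6}/\mu^{T}_{\lambda,2}\bigr)^{1/4}$, i.e. $\hat\delta^4=2\mu^{T}_{\lambda,6}/\mu^{T}_{\lambda,2}$. I would then apply Cauchy--Schwarz once more in the form $\mu^{T}_{\lambda,4}\le\sqrt{\mu^{T}_{\lambda,2}\,\mu^{T}_{\lambda,6}}$ to eliminate the fourth moment, after which this exact choice of $\hat\delta$ forces $\mu^{T}_{\lambda,4}/\hat\delta^2\le\mu^{T}_{\lambda,2}/\sqrt2$ and $\mu^{T}_{\lambda,6}/\hat\delta^2=\sqrt{\mu^{T}_{\lambda,2}\mu^{T}_{\lambda,6}}/\sqrt2\le\mu^{T}_{\lambda,2}/\sqrt2$; together with $\mu^{T}_{\lambda,4}\le\mu^{T}_{\lambda,2}$, each of the four terms is at most $\mu^{T}_{\lambda,2}=(a^2+x^2)/\lambda$. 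Collecting the four contributions and the constants coming from the Taylor remainder and the modulus inequality produces the factor $16$ and the stated bound.

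The moment algebra is routine; the main obstacle is this last step — choosing $\hat\delta$ and performing the Cauchy--Schwarz reductions so that all error terms collapse to a single multiple of $(a^2+x^2)/\lambda$ with the clean constant $16$, while verifying that the required inequalities ($\mu^{T}_{\lambda,4}\le\sqrt{\mu^{T}_{\lambda,2}\mu^{T}_{\lambda,6}}$, $\sqrt{\mu^{T}_{\lambda,2}\mu^{T}_{\lambda,6}}\le\mu^{T}_{\lambda,2}$, and the weighted‑modulus growth estimate) are genuinely valid for the range of $\lambda$ under consideration. One should also confirm, using the polynomial growth of $f\in\mathcal{S}$ together with Corollary~\ref{c1}, that the tails of the integral defining the remainder are negligible, so that the term‑by‑term integration against $\kappa^{a}_\lambda$ is justified.
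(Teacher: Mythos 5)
Your proposal follows essentially the same route as the paper: a second-order Taylor expansion, application of $T_{\lambda,a}$ using the central moments of Remark~\ref{R6}, a growth inequality for the weighted modulus $\Omega(f'',\hat\delta)$ to bound the remainder, and the choice $\hat\delta^4\asymp\mu^{T}_{\lambda,6}/\mu^{T}_{\lambda,2}$ (the paper takes exactly $\hat\delta^4=\mu^{T}_{\lambda,6}/\mu^{T}_{\lambda,2}$, assuming $\hat\delta\le1$, and gets the factor $16$ from $\tfrac12\cdot4\cdot(1+\hat\delta^2)^2\cdot2$). The only cosmetic difference is that the paper uses the Agratini-type inequality $|f''(\nu)-f''(x)|\le 4(1+\hat\delta^2)^2\bigl(1+(\nu-x)^4/\hat\delta^4\bigr)(1+x^2)\Omega(f'',\hat\delta)$, which yields only the $\mu^{T}_{\lambda,2}$ and $\mu^{T}_{\lambda,6}/\hat\delta^4$ terms and so avoids your extra Cauchy--Schwarz reductions of the fourth moment; both versions share the same implicit large-$\lambda$ restriction needed for $\hat\delta\le1$.
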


\begin{proof}
For the function $f\in C^{2}\left(
{\mathbb{R}}
\right) $, Taylor's expansion at the real $x$ is given by
\begin{equation}
f\left( \nu\right) =\sum_{\jmath=0}^{2}\frac{\left( \nu-x\right)^{\jmath}}{\jmath!}f^{\jmath}(x) %f\left( x\right) +\left( \nu-x\right) f^{\prime }\left(
%x\right) +\frac{\left( \nu-x\right) ^{2}}{2!}f^{\prime \prime }\left(
%x\right)
+\frac{\left( \nu-x\right) ^{2}}{2!}\varpi \left( \nu,x\right) ,
\label{e7}
\end{equation}%
where%
\begin{eqnarray*}
\varpi \left( \nu,x\right) =f^{\prime \prime }\left(\hat \xi \right)
-f^{\prime \prime }\left( x\right),
\end{eqnarray*}
and  $\varpi \left(
\nu,x\right) $ is a continuous function with $\hat \xi $ lies between $\nu$ and $x$. Also, $\displaystyle \lim_{\nu \rightarrow x} \varpi(\nu,x)=x$ and $\varpi $ vanishes at $%
x$. Now using Remark \ref{R6} and applying the operator $T_{\lambda,a }$ on $\left( \ref{e7}%
\right)$, we have%
\begin{eqnarray}
&&\left\vert \left( T_{\lambda,a}f\right) \left( x\right) -f\left(
x\right)-\left( \frac{%
x^2+a^2 }{2\lambda}\right) f^{\prime \prime}\left(
x\right) \right\vert   \nonumber \\
&\leq & \frac{1}{2}\left(T_{\lambda,a} \left\vert \varpi \left( \nu,x\right) \right\vert \left( .-x\right) ^{2}\right) \left(
x\right).  \label{e8}
\end{eqnarray}
We estimate $\left( T_{\lambda
,a} \left\vert \varpi \left( \nu,x\right) \right\vert
\right) \left( x\right)$ to claim the proof of the theorem. According to \cite{agratini}, we are familiar that
for $f\in C^{\varrho}\left(
{R}
\right) $, let $\Omega\left( f,.\right) $ be as defined above. The following relation is true for any $%
\left( \nu,x\right) \in
{\mathbb{R}}
\times
{\mathbb{R}}
$ and any $\hat{\delta} >0$:%
\begin{equation}
\left\vert f\left( \nu\right) -f\left( x\right) \right\vert \leq 4\left( 1+\hat{\delta}
^{2}\right) ^{2}\left( 1+%
\frac{\left( \nu-x\right) ^{4}}{\hat{\delta} ^{4}}\right) \left( 1+x^{2}\right) \Omega\left( f,\hat{\delta}\right).
\label{e9}
\end{equation}%
 By utilising both $\left( \ref{e7}\right) $ and $\left( \ref{e9}\right) $ for $f^{\prime \prime}$, we get%
\begin{eqnarray*}
\left\vert \varpi \left( \nu,x\right) \right\vert =\left\vert f^{\prime
\prime}\left( \hat \xi \right) -f^{\prime \prime }\left( x\right)
\right\vert \leq4\left( 1+\hat{\delta} ^{2}\right) ^{2} \left( 1+\frac{\left( \nu-x\right) ^{4}}{\hat{\delta} ^{4}}\right)
\left( 1+x^{2}\right) \Omega\left(
f^{\prime \prime},\hat{\delta}\right)
\end{eqnarray*}
and, in the factor $\left( 1+\hat{\delta}^{2}\right) ^{2}$ assuming $\hat{\delta}
\leq 1$, we can write
\begin{eqnarray*}
\left(T_{\lambda,a} \left\vert \varpi \left( \nu,x\right)
\right\vert \left( .-x\right) ^{2}\right) \left( x\right) \leq 16 \mu _{\lambda ,2}^T\left( x\right) \left( 1+\frac{\mu
_{\lambda ,6}^{T }\left( x\right) }{\hat{\delta}^{4}\mu _{\lambda ,2}^{T
}\left( x\right) }\right) \left(
1+x^{2}\right) \Omega\left( f^{\prime \prime },\hat{\delta}
\right) .
\end{eqnarray*}
Moreover, we may choose $\hat{\delta} ^{4}=\mu _{\lambda ,6}^T\left( x\right) /\mu
_{\lambda ,2}^T\left( x\right) \leq 1$. This selection is acceptable because
$\lim\limits_{\lambda \rightarrow \infty }\frac{\mu _{\lambda ,6}^T\left(
x\right) }{\mu _{\lambda ,2}^T\left( x\right) }=0$. Getting back to $\left( \ref%
{e8}\right) $, the desired inequality is proved.
\end{proof}

\begin{theorem}
For usual modulus of continuity $\omega$, if $f$, $f^{\prime \prime}\in C^{\rho}\left(
{\mathbb{R}}
\right)$  and $x \in
{\mathbb{R}}$, then we have%
\begin{eqnarray*}
&&\left\vert \lambda\left[\left( T_{\lambda, a }f\right) \left( x\right) -f\left(
x\right)\right]-\left( \frac{%
a^2 +x^2}{2}\right) f^{\prime \prime}\left(
x\right) \right\vert  \\
&\leq &2 (a^2+x^2)\omega\left(f^{\prime \prime},\frac{1}{\sqrt{\lambda}}\right) \left[ 1+\frac{3(\lambda+2)x^2+(3 \lambda+2)a^2}{\lambda}\right].
\end{eqnarray*}
\end{theorem}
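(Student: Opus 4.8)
The plan is to follow the same Taylor-expansion scheme as in Theorem~\ref{T1}, but to estimate the error integral through the \emph{ordinary} modulus $\omega$ rather than the weighted modulus $\Omega$. Starting from the second-order Taylor formula (\ref{e7}) with remainder $\varpi(\nu,x)=f''(\hat\xi)-f''(x)$, I would apply $T_{\lambda,a}$ and invoke the central-moment values recorded in Remark~\ref{R6}: since $\mu_{\lambda,0}^{T}(x)=1$, $\mu_{\lambda,1}^{T}(x)=0$ and $\mu_{\lambda,2}^{T}(x)=\frac{a^2+x^2}{\lambda}$, the constant, linear and quadratic terms collapse to give
\begin{equation*}
\left(T_{\lambda,a}f\right)(x)-f(x)-\frac{f''(x)}{2}\,\mu_{\lambda,2}^{T}(x)=\frac12\left(T_{\lambda,a}\big[(\nu-x)^2\varpi(\nu,x)\big]\right)(x).
\end{equation*}
Multiplying by $\lambda$ and using the crucial identity $\lambda\,\mu_{\lambda,2}^{T}(x)=a^2+x^2$ makes the subtracted $f''$-term match exactly, so that
\begin{equation*}
\left\vert\lambda\big[\left(T_{\lambda,a}f\right)(x)-f(x)\big]-\tfrac{a^2+x^2}{2}f''(x)\right\vert\le\frac{\lambda}{2}\left(T_{\lambda,a}\big[(\nu-x)^2|\varpi(\nu,x)|\big]\right)(x).
\end{equation*}

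The heart of the argument is to control the right-hand integral. Because $\hat\xi$ lies between $\nu$ and $x$, I would bound $|\varpi(\nu,x)|\le\omega(f'',|\nu-x|)$ and then apply the standard modulus inequality $\omega(f'',|t|)\le\big(1+t^2/\delta^2\big)\,\omega(f'',\delta)$, valid for every $t$ and every $\delta>0$. This replaces the error integral by a combination of the second and fourth central moments,
\begin{equation*}
\left(T_{\lambda,a}\big[(\nu-x)^2|\varpi|\big]\right)(x)\le\omega(f'',\delta)\left[\mu_{\lambda,2}^{T}(x)+\frac{1}{\delta^2}\,\mu_{\lambda,4}^{T}(x)\right],
\end{equation*}
both of which are supplied explicitly by Remark~\ref{R6}, with $\mu_{\lambda,4}^{T}(x)=\frac{(a^2+x^2)\left[3(\lambda+2)x^2+(3\lambda+2)a^2\right]}{\lambda^3}$.

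Finally I would take $\delta=1/\sqrt{\lambda}$, so that $1/\delta^2=\lambda$, and factor out $\mu_{\lambda,2}^{T}(x)=\frac{a^2+x^2}{\lambda}$. The bracket then becomes exactly $1+\frac{3(\lambda+2)x^2+(3\lambda+2)a^2}{\lambda}$ and the surviving prefactor $\frac{\lambda}{2}\cdot\frac{a^2+x^2}{\lambda}=\frac{a^2+x^2}{2}$ reproduces the announced form, the stated constant $2$ providing ample slack over what the computation strictly yields. I expect the difficulties to be organizational rather than conceptual: justifying the term-by-term application of $T_{\lambda,a}$ (legitimate since $f$ and its truncated Taylor polynomial both lie in the domain, so $\mu_{\lambda,2}^{T}$ and $\mu_{\lambda,4}^{T}$ are finite by Proposition~\ref{OP1}), and selecting the modulus inequality in the right form. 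The main obstacle is precisely the latter choice: applying $\omega(f'',|t|)\le(1+t^2/\delta^2)\omega(f'',\delta)$ lands directly on the fourth moment $\mu_{\lambda,4}^{T}$ that appears in the claim, whereas the alternative bound $\omega(f'',|t|)\le(1+|t|/\delta)\omega(f'',\delta)$ would force a $T_{\lambda,a}|\nu-x|^3$ term, a Cauchy--Schwarz step, and an awkward square root that does not match the clean polynomial structure of the stated estimate.
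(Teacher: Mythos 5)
Your proposal is correct and follows essentially the same route as the paper: Taylor expansion at $x$, reduction to $\frac{\lambda}{2}\left(T_{\lambda,a}\left[(\nu-x)^2|\varpi(\nu,x)|\right]\right)(x)$, the modulus bound $|\varpi(\nu,x)|\le\left(1+\frac{(\nu-x)^2}{\hat\delta^2}\right)\omega(f'',\hat\delta)$ (the paper carries an extra harmless factor $2$), the explicit $\mu_{\lambda,2}^{T}$ and $\mu_{\lambda,4}^{T}$ from Remark~\ref{R6}, and the choice $\hat\delta=1/\sqrt{\lambda}$. Your closing observation that the quadratic form of the modulus inequality is the right one (avoiding a third-moment/Cauchy--Schwarz detour) is exactly what the paper does implicitly.
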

\begin{proof}
Using the usual modulus of continuity and following the same steps as in Theorem \ref{T1}, we get
\begin{eqnarray*}
|\varpi(\nu,x)|& \le&2 \left(1+\frac{(\nu-x)^2}{\hat{\delta}^2}\right) \cdot \omega(f^{\prime \prime}, \hat{\delta}).
\end{eqnarray*}
Thus we have
\begin{eqnarray*}
\lambda \biggl( T_{\lambda,a}\left|\varpi(\nu,x)\right|(-x+\nu)^2\biggr)(x)&\le &2 \lambda \hspace{2pt} \omega\left(f^{\prime \prime},\hat{\delta}\right) \left[\mu_{\lambda,2}^T(x)+\frac{\mu_{\lambda,4}^T(x)}{\hat{\delta}^2}\right].
\end{eqnarray*}
Selecting $\hat{\delta}=1/\sqrt{\lambda}$, we obtain the desired result.

\end{proof}

\begin{theorem} If we assume that $f\in C(\mathbb{R})$ with $|f(\nu)|\le K e^{N|\nu|},$ for $K\ge 0$  also if $f^{(p+2)}$ exists at a fixed point  $x\in \mathbb{R}$. Then
\begin{eqnarray*}
&&\displaystyle \lim_{\lambda\rightarrow\infty}\lambda \left[(T_{\lambda,a}^{(p)}f)(x)
-f^{(p)}(x)\right]\\&=&\frac{(p-1)p}{2}f^{(p)}(x)+ px f^{(p+1)}(x)+\frac{(a^2+x^2)}{2}f^{(p+2)}(x).
\end{eqnarray*}
\end{theorem}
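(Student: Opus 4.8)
The plan is to isolate the polynomial part of $f$ at the fixed point and treat the Peano remainder separately, arranging that the announced limit is produced entirely by the second-order moment expansion already recorded in Remark~\ref{R1}. Writing $x$ for the fixed point, I would expand by Taylor's theorem $f(\nu)=P(\nu)+\varepsilon(\nu)(\nu-x)^{p+2}$, where $P(\nu)=\sum_{k=0}^{p+2}\frac{f^{(k)}(x)}{k!}(\nu-x)^{k}$ is a fixed polynomial of degree $p+2$ (its coefficients do not depend on the integration variable) and $\varepsilon(\nu)\to 0$ as $\nu\to x$, with $|\varepsilon(\nu)(\nu-x)^{p+2}|$ of at most exponential growth because of the hypothesis $|f(\nu)|\le Ke^{N|\nu|}$. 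Correspondingly $(T_{\lambda,a}^{(p)}f)(x)=(T_{\lambda,a}^{(p)}P)(x)+(T_{\lambda,a}^{(p)}[\varepsilon(\cdot)(\cdot-x)^{p+2}])(x)$, and differentiation under the integral sign (legitimate thanks to the decay of the kernel and finiteness of all moments) lets me pass $\partial_x^{p}$ inside in both summands.

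For the polynomial part, since $P$ is a fixed polynomial, $(T_{\lambda,a}P)(x)$ is by Proposition~\ref{OP1} an honest polynomial in $x$ whose coefficients are polynomials in $1/\lambda$, and $(T_{\lambda,a}^{(p)}P)(x)=\frac{d^{p}}{dx^{p}}(T_{\lambda,a}P)(x)$ is simply its $p$-th derivative. Reading Remark~\ref{R1} to order $\lambda^{-1}$ gives $(T_{\lambda,a}e_{q})(x)=e_{q}(x)+\frac{a^{2}+x^{2}}{2\lambda}e_{q}''(x)+O(\lambda^{-2})$, since $\frac{q(q-1)}{2\lambda}(x^{q}+a^{2}x^{q-2})=\frac{a^{2}+x^{2}}{2\lambda}(e_q)''(x)$; by linearity $(T_{\lambda,a}P)(x)=P(x)+\frac{a^{2}+x^{2}}{2\lambda}P''(x)+O(\lambda^{-2})$, the error being a polynomial in $x$ times $O(\lambda^{-2})$ and hence stable under the $p$ further differentiations evaluated at the fixed point. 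Applying $\frac{d^{p}}{dx^{p}}$ and using the Leibniz rule on $\frac{a^{2}+x^{2}}{2}P''$ produces $\frac{a^{2}+x^{2}}{2}P^{(p+2)}+pxP^{(p+1)}+\frac{p(p-1)}{2}P^{(p)}$; since $P^{(k)}(x)=f^{(k)}(x)$ for $k=p,p+1,p+2$, this is exactly the claimed right-hand side as the coefficient of $1/\lambda$, so $\lambda[(T_{\lambda,a}^{(p)}P)(x)-f^{(p)}(x)]$ already converges to the target.

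It then remains to show $\lambda\,(T_{\lambda,a}^{(p)}[\varepsilon(\cdot)(\cdot-x)^{p+2}])(x)\to 0$, which I regard as the main obstacle. Here I would invoke Lemma~\ref{last} to write $\partial_{x}^{p}\kappa_{\lambda}^{a}(x,\nu)=(a^{2}+x^{2})^{-p}\sum_{2i+j\le p}\lambda^{i+j}(\nu-x)^{j}\Game_{i,j,p}^{a}(x)\,\kappa_{\lambda}^{a}(x,\nu)$, so the remainder becomes a finite combination of integrals $\lambda^{i+j}\int(\nu-x)^{j+p+2}\varepsilon(\nu)\kappa_{\lambda}^{a}(x,\nu)\,d\nu$. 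Splitting each integral at $|\nu-x|<\delta$ and $|\nu-x|\ge\delta$, on the inner region I use $|\varepsilon|<\eta$ together with the order bound $\int|\nu-x|^{m}\kappa_{\lambda}^{a}(x,\nu)\,d\nu=O(\lambda^{-\lceil m/2\rceil})$, read off from the even central moments of Remark~\ref{R6} and a Hölder/Jensen interpolation for the odd ones; the constraint $2i+j\le p$ then forces $i+j-\lceil (j+p+2)/2\rceil\le -1$, so every inner term is $O(\eta\lambda^{-1})$ and survives multiplication by $\lambda$ only as $O(\eta)$, which is arbitrarily small. On the outer region the factor $(\nu-x)^{j+p+2}\varepsilon(\nu)$ has at most exponential growth, so Corollary~\ref{c1} makes each such integral $O(\lambda^{-s})$ for every $s$, and choosing $s>p+1$ annihilates the $\lambda^{1+i+j}$ prefactor.

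The delicate points I anticipate lie entirely in this last paragraph: justifying the uniform order $O(\lambda^{-\lceil m/2\rceil})$ for the absolute central moments (not merely the signed ones tabulated in Remark~\ref{R6}), and verifying that the growth of $\varepsilon(\nu)(\nu-x)^{p+2}$ is genuinely dominated by $Ke^{N|\nu|}$ plus the polynomial $P$ so that Corollary~\ref{c1} is applicable. The polynomial part is, by contrast, essentially bookkeeping once $(T_{\lambda,a}P)(x)=P(x)+\frac{a^{2}+x^{2}}{2\lambda}P''(x)+O(\lambda^{-2})$ is in hand, and the reassuring consistency check is that the target right-hand side is precisely $\frac{d^{p}}{dx^{p}}\big[\frac{a^{2}+x^{2}}{2}f''(x)\big]$, i.e. the $p$-th derivative of the ordinary Voronovskaya limit obtained in Theorem~\ref{T1}.
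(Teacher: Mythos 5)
Your proposal is correct and follows essentially the same route as the paper: a Taylor expansion of order $p+2$ with a Peano remainder, the main term extracted from the moment asymptotics of Remark~\ref{R1}, and the remainder term handled via Lemma~\ref{last}, the split at $|\nu-x|=\delta$, the Schwarz inequality, and Corollary~\ref{c1}. The only difference is organizational: you obtain the limit by a Leibniz differentiation of $(T_{\lambda,a}P)(x)=P(x)+\frac{a^{2}+x^{2}}{2\lambda}P''(x)+O(\lambda^{-2})$, whereas the paper tracks the coefficients of $f^{(p)}$, $f^{(p+1)}$, $f^{(p+2)}$ term by term.
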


 \begin{proof}
 Using Taylor's expansion with $\psi(\nu,x)\rightarrow0$ as $\nu\rightarrow x$, we can write
 \begin{eqnarray*}
 (T_{\lambda, a}^{(p)}f)(x)&=&\bigg(\frac{\partial^p}{\partial w^{p}}(T_{\lambda,a}f)(w)\bigg)_{w=x}\\&=& \sum_{m=0}^{p+2}\frac{f^{(m)}(x)}{m!}\bigg(\frac{\partial^p}{\partial w^{p}}(T_{\lambda,a}(e_1-x e_0)^{m})(w)\bigg)_{w=x}
  +
 \bigg(\frac{\partial^p}{\partial w^{p}}(T_{\lambda, a}\psi(\nu,x)(e_1-x e_0)^{p+2})(w)\bigg)_{w=x} \\
 &=& \sum_{m=0}^{p+2}\frac{f^{(m)}(x)}{m!}\sum_{j=0}^{m}{m\choose j}(-x)^{m-j}\bigg(\frac{\partial^p}{\partial w^{p}}(T_{\lambda, a}e_{j})(w)\bigg)_{w=x} +
 \bigg(\frac{\partial^p}{\partial w^{p}}(T_{\lambda, a}\psi(\nu,x)(e_1-x e_0)^{p+2})(w)\bigg)_{w=x} \\
 &:= &E_1+E_2.
 \end{eqnarray*}
Firstly we estimate $E_1$ as follows:
\begin{eqnarray*}
E_1&=& \frac{f^{(p)}(x)}{p!}\biggl[\bigg(\frac{\partial^p}{\partial w^{p}}(T_{\lambda,a}e_{p})(w)\bigg)_{w=x}\biggr]\\&&+
\frac{f^{(p+1)}(x)}{(p+1)!}\biggl[(p+1)(-x)\bigg(\frac{\partial^p}{\partial w^{p}}(T_{\lambda,a}e_{p})(w)\bigg)_{w=x}
+\bigg(\frac{\partial^p}{\partial w^{p}}(T_{\lambda,a}e_{p+1})(w)\bigg)_{w=x}\biggr]
\\&&+
\frac{f^{(p+2)}(x)}{(p+2)!}\biggl[\frac{(p+2)(p+1)}{2}x^2 \bigg(\frac{\partial^p}{\partial w^{p}}(T_{\lambda,a}e_{p})(w)\bigg)_{w=x}\\
&&+(p+2)(-x)\bigg(\frac{\partial^p}{\partial w^{p}}(T_{\lambda,a}e_{p+1})(w)\bigg)_{w=x}+
\bigg(\frac{\partial^p}{\partial w^{p}}(T_{\lambda,a}e_{p+2})(w)\bigg)_{w=x}\biggr].
\end{eqnarray*}
Applying Remark \ref{R1}, we have
\begin{eqnarray*}
E_1&=&f^{(p)}(x)\biggl[1+\frac{p(p-1)}{2 \lambda}+\frac{p(p-1)(p-2)(3p-1)}{24 \lambda^2}\biggr]\\&&+x f^{(p+1)}(x)\biggl[ \frac{p}{\lambda}+\frac{p^2(p-1)}{2 \lambda^2}\biggr]+x^2 f^{(p+2)}(x)\biggl[ \frac{1}{2\lambda}+\frac{p(3p+1)}{4 \lambda^2}\biggr]\\&&+a^2 f^{(p+2)}(x)\biggl[\frac{1}{2 \lambda} +\frac{p(3p+1)}{12 \lambda^2}\biggr]+O(\lambda^{-3}).
\end{eqnarray*}
Thus on applying limits, we get
\begin{eqnarray*}
&&\displaystyle \lim_{\lambda \rightarrow \infty}\lambda\left[ (T_{\lambda, a}^{(p)}f)(x)-f^{(p)}(x)\right]\\&=&\frac{p(p-1)}{2}f^{(p)}(x)+px f^{(p+1)}(x)+\frac{(a^2+x^2)}{2}f^{(p+2)}(x)+\displaystyle \lim_{\lambda \rightarrow \infty} \lambda E_2.
\end{eqnarray*}
To complete the proof, it suffices to show that $\displaystyle\lim_{\lambda \rightarrow\infty}\lambda E_2= 0.$ We proceed in the following manner.
By Lemma \ref{last}, we have
\begin{eqnarray}
|E_2|&\leq&\sum_{{2i+j\leq p}\atop { i,j\geq 0}}\lambda^{i+j}\int_{-\infty}^{\infty}|\nu-x|^{j}\frac{|\Game_{i,j,p}^{a}(x)|}{(a^2+x^2)^p}
\kappa_\lambda^{a}(x,\nu)|\psi(\nu,x)|.|\nu-x|^{p+2} d\nu.\nonumber
\end{eqnarray}
Since $\psi(\nu,x)\rightarrow 0$ as $\nu\rightarrow x,$ thus for a given $\epsilon>0$ there exists a  $\delta>0$ in such a way that $|\psi(\nu,x)|<\epsilon$
when $|\nu-x|<\delta.$ Further,   $|(\nu-x)^{p+2}\psi(\nu,x)|\leq K e^{N|\nu|}$ for $|\nu-x|\geq \delta $ for some absolute constant $K.$ Thus
\begin{eqnarray}
|E_2|&\leq&\sum_{{2i+j\leq p}\atop {i,j\geq 0}}\lambda^{i+j}\left(\int_{|\nu-x|<\delta}+\int_{|\nu-x|\ge \delta}\right)|\nu-x|^{j}\frac{|\Game_{i,j,p}^{a}(x)|}{(a^2+x^2)^p}
\kappa_\lambda^{a}(x,\nu)|\psi(\nu,x)|.|\nu-x|^{p+2} d\nu.\nonumber\\
&:=&E_{21}+E_{22}.\nonumber
\end{eqnarray}

Applying Schwarz inequality and  Remark \ref{R6}, we have
\begin{eqnarray}
E_{21}&\le&\epsilon\,\ M\sum_{{2i+j\leq p}\atop { i,j\geq 0}}\lambda^{i+j}\int_{|\nu-x|<\delta}\kappa_\lambda^{a}(x,\nu).|\nu-x|^{j+p+2} d\nu\nonumber\\
&\leq&\epsilon\,\ M\sum_{ {2i+j\leq p}\atop{ i,j\geq 0}}\lambda^{i+j}\bigg(\mu_{\lambda,0}^T(x)\bigg)^{1/2}\bigg(\mu^T_{\lambda,2j+2p+4}(x)\bigg)^{1/2}\nonumber\\
&=&\epsilon \sum_{{2i+j\leq p}\atop{i,j\geq 0}}\lambda^{i+j}O\bigg(\frac{1}{\lambda^{(j+p+2)/2}}\bigg)=\epsilon. O(\lambda^{-1}),\nonumber
\end{eqnarray}
where in the above $M=\displaystyle\sup \left\{\frac{|\Game_{i,j,p}(x)|}{(a^2+x^2)^{p}}:j+2i \leq p, \hspace{1pt} i,j \geq 0 \right\}.$
Due to arbitrariness of  $\epsilon>0$, $\lambda E_{21}\rightarrow 0$ as $\lambda \rightarrow \infty.$ Also by using Schwarz inequality and Corollary \ref{c1}, it implies
\begin{eqnarray}
E_{22}&\leq& KM \sum_{{2i+j\le p}\atop {i,j\geq 0}}\lambda^{i+j}\int_{|\nu-x|\ge \delta}|\nu-x|^j \kappa_\lambda^{a}(x,\nu)e^{N|\nu|} d\nu\nonumber\\
&\le& KM\sum_{{2i+j\le p}\atop {i,j\geq 0}}\lambda^{i+j}\bigg(x_{\lambda,2j}^T(x)\bigg)^{1/2}\bigg(\int_{-\infty}^\infty \kappa_\lambda^{a}(x,\nu)e^{2N |\nu|}d\nu\bigg)^{1/2}\nonumber\\
&\leq& KM \sum_{{2i+j\leq p}\atop{ i,j\geq 0}}\lambda^{i+j}O\bigg(\frac{1}{\lambda^{j/2}}\bigg)O\bigg(\frac{1}{\lambda^{s/2}}\bigg)= O\bigg(\frac{1}{\lambda^{(s-p)/2}}\bigg),\nonumber
\end{eqnarray}
implying $\lambda E_{22}\rightarrow 0$, as $\lambda \rightarrow \infty$ for $s >p+2.$
Thus, collecting the estimates of $E_1$ and $E_2,$ the desired result is immediate.
\end{proof}

\begin{corollary}
Let $f\in C(\mathbb{R})$ with $|f(\nu)|\le Ke^{N|\nu|},$ $K\ge 0$, if $f^{\prime\prime}$ exists at a fixed point $x\in \mathbb{R},$ then
\begin{eqnarray*}
\lim_{\lambda \rightarrow \infty}\lambda[T_{\lambda,a}(f,x)-f(x)]=\frac{(a^2+x^2)}{2}f^{\prime\prime}(x).
\end{eqnarray*}
\end{corollary}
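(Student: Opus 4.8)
The plan is to recognize the statement as the special case $p=0$ of the preceding theorem, so essentially no new work is required. First I would verify that the hypotheses align: the parent theorem assumes $f \in C(\mathbb{R})$ with $|f(\nu)| \le Ke^{N|\nu|}$ and that $f^{(p+2)}$ exists at the fixed point $x$; specializing to $p=0$ this demands precisely that $f''$ exist at $x$, which is exactly the hypothesis here, and the exponential growth bound carries over verbatim. Since the zeroth-order derivative of an operator is the operator itself, $(T_{\lambda,a}^{(0)}f)(x) = (T_{\lambda,a}f)(x)$ and $f^{(0)}(x) = f(x)$, so the left-hand side of the theorem collapses to $\lim_{\lambda\to\infty}\lambda[T_{\lambda,a}(f,x) - f(x)]$, matching the corollary.

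Next I would substitute $p=0$ into the limit formula $\frac{(p-1)p}{2}f^{(p)}(x) + pxf^{(p+1)}(x) + \frac{a^2+x^2}{2}f^{(p+2)}(x)$. The first coefficient $\frac{(p-1)p}{2}$ vanishes because it carries a factor $p$, and the middle term $pxf^{(p+1)}(x)$ vanishes for the same reason; only the final term survives, yielding $\frac{a^2+x^2}{2}f''(x)$, which is the claimed right-hand side.

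The one point worth checking, though it is hardly an obstacle, is confirming that the $p=0$ instance of the parent theorem is genuinely admissible, i.e. that nothing in its proof implicitly required $p \ge 1$. Inspecting that argument, the Taylor expansion is taken up to order $p+2 = 2$, the remainder control via Lemma \ref{last} and Corollary \ref{c1} is uniform in $p \ge 0$, and the moment estimates drawn from Remark \ref{R6} hold for every nonnegative order; hence the reasoning is valid at $p=0$ without modification. Alternatively, one could give a short self-contained derivation in the spirit of Theorem \ref{T1}: expand $f(\nu) = f(x) + (\nu-x)f'(x) + \frac{(\nu-x)^2}{2}f''(x) + \frac{(\nu-x)^2}{2}\varpi(\nu,x)$, apply $T_{\lambda,a}$, and use $\mu^T_{\lambda,0}(x)=1$, $\mu^T_{\lambda,1}(x)=0$, and $\lambda\,\mu^T_{\lambda,2}(x)\to a^2+x^2$ from Remark \ref{R6}, while the remainder is shown to vanish in the limit by the same Schwarz-inequality splitting into $|\nu-x|<\delta$ and $|\nu-x|\ge\delta$ employed in the parent theorem.
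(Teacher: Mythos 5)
Your proposal is correct and matches the paper's intent exactly: the corollary is stated immediately after the simultaneous-approximation theorem precisely as its $p=0$ instance, with no separate proof given, and your substitution kills the first two terms and leaves $\frac{a^2+x^2}{2}f''(x)$ as claimed. Your sanity check that nothing in the parent argument breaks at $p=0$ is a sensible (and correct) addition, but the route is the same as the paper's.
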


\newpage
\section{Graphical comparison between operators}

\subsection{For the function $\bm{f(x)=x \sin (x)$}}
 In the following graphs, we analyze the convergence of the operators $T_{\lambda,a}$ for the function $f(x)=x \sin(x)$.
\begin{figure}[h]
\centering
\subfloat[Fig 1: Convergence of $(T_{\lambda,100} f)(x)$]{\includegraphics[width=0.39\textwidth]{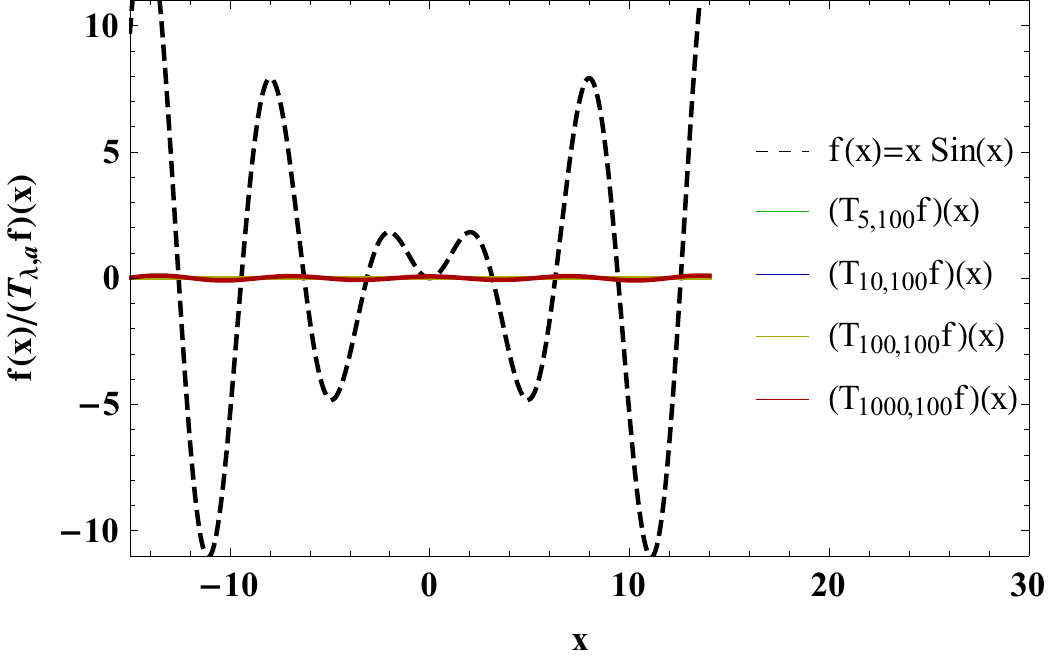}}\hfill
\subfloat[Fig 2: Convergence of $(T_{\lambda,10} f)(x)$]{\includegraphics[width=0.39\textwidth]{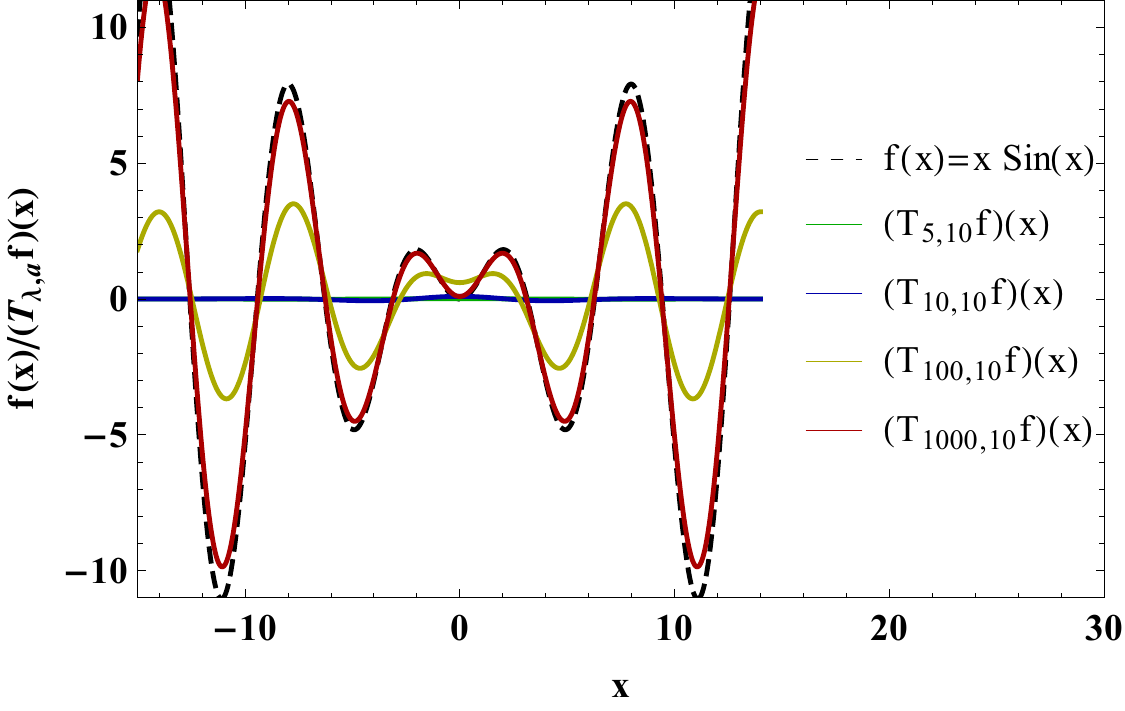}}\\
\subfloat[Fig 3: Convergence of $(T_{\lambda,5} f)(x)$ ]{\includegraphics[width=0.39\textwidth]{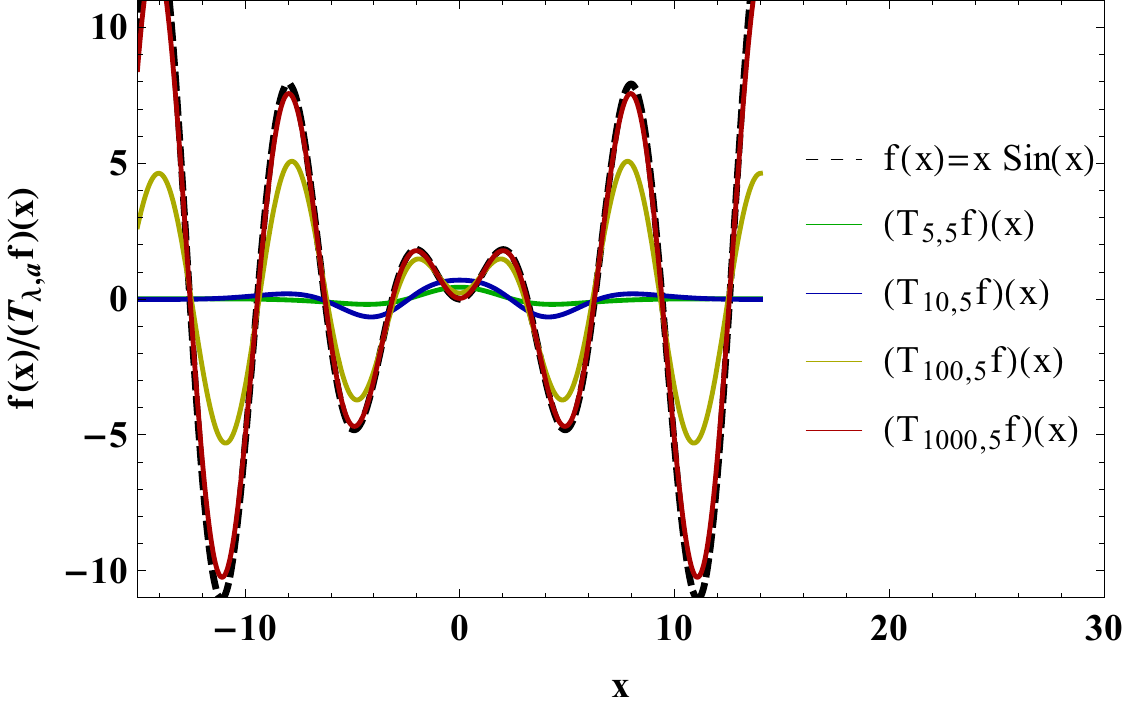}}\hfill
\subfloat[Fig 4: Convergence of $(T_{\lambda,1} f)(x)$]{\includegraphics[width=0.39\textwidth]{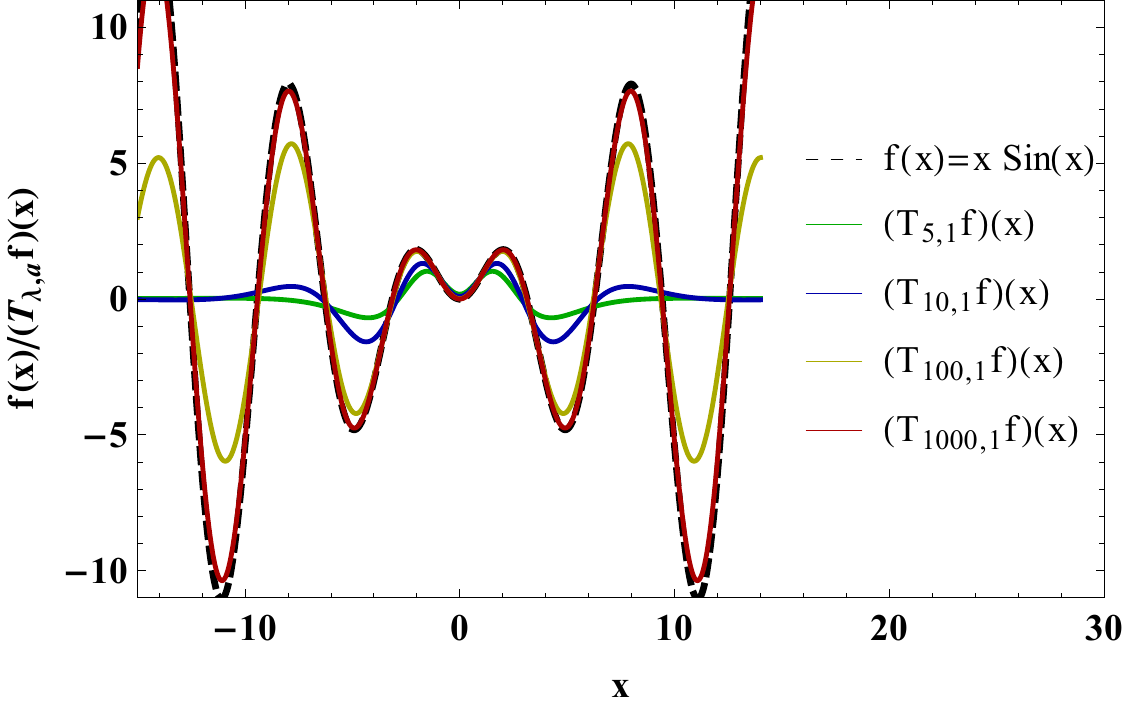}}\\
\subfloat[Fig 5: Convergence of $(T_{\lambda,a \to 0} f)(x)$]{\includegraphics[width=0.39\textwidth]{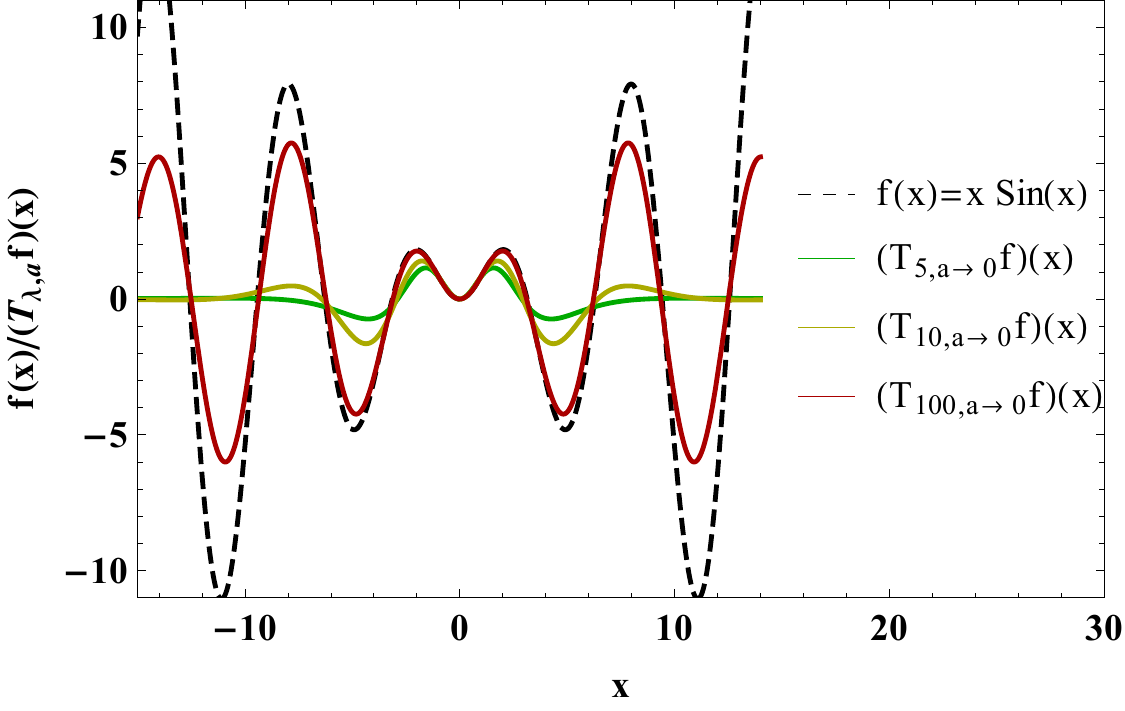}}\hfill \\
\subfloat[Fig 6: Convergence of $(T_{10,a} f)(x)$]{\includegraphics[width=0.39\textwidth]{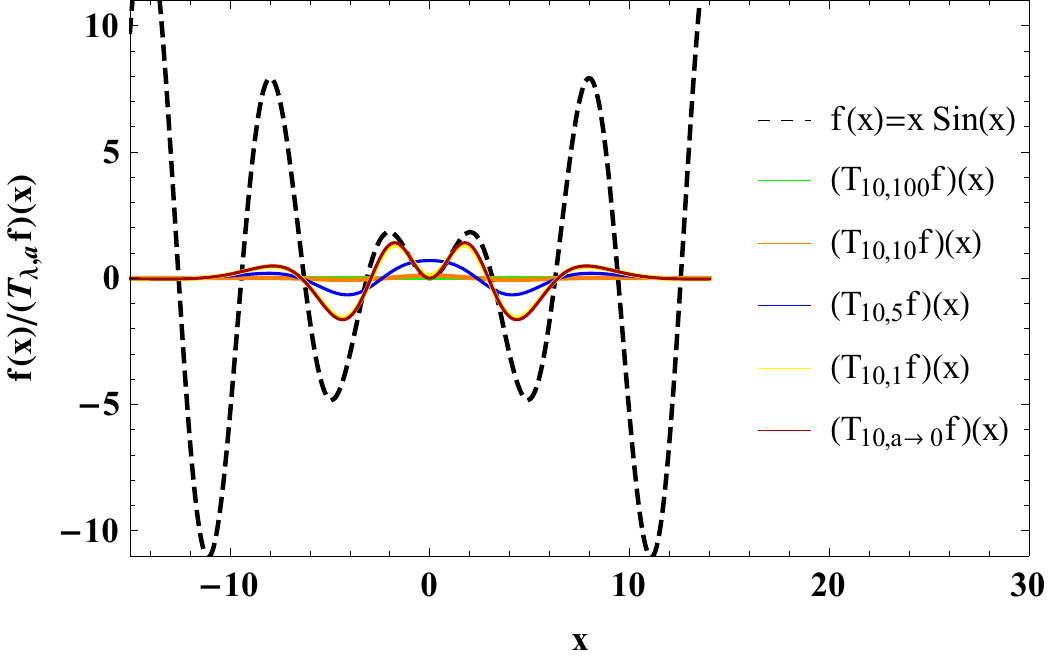}}\hfill
\subfloat[Fig 7: Convergence of $(T_{100,a} f)(x)$]{\includegraphics[width=0.39\textwidth]{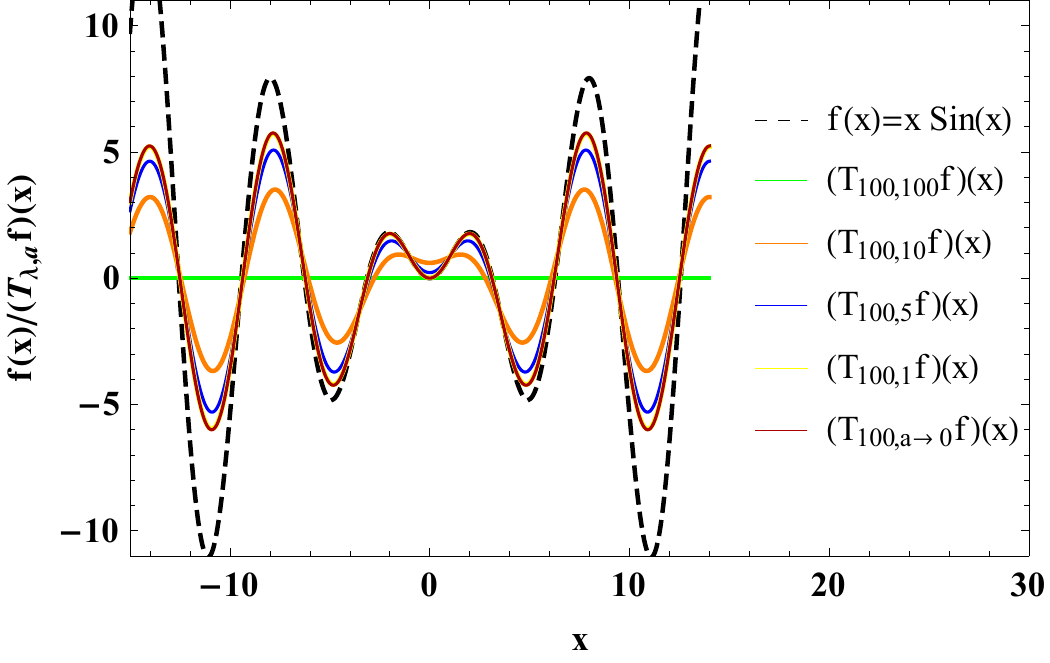}} \\
\end{figure}
\newpage
\subsection{For the function $\bm{f(x)=-\frac{x}{2} \cos (\pi x)$}}
 In the following graphs, we analyze the convergence of the operators $T_{\lambda,a}$ for the function $f(x)=-\frac{x}{2} \cos (\pi x)$.
\begin{figure}[h]
\centering
\subfloat[Fig 8: Convergence of $(T_{\lambda,100} f)(x)$]{\includegraphics[width=0.39\textwidth]{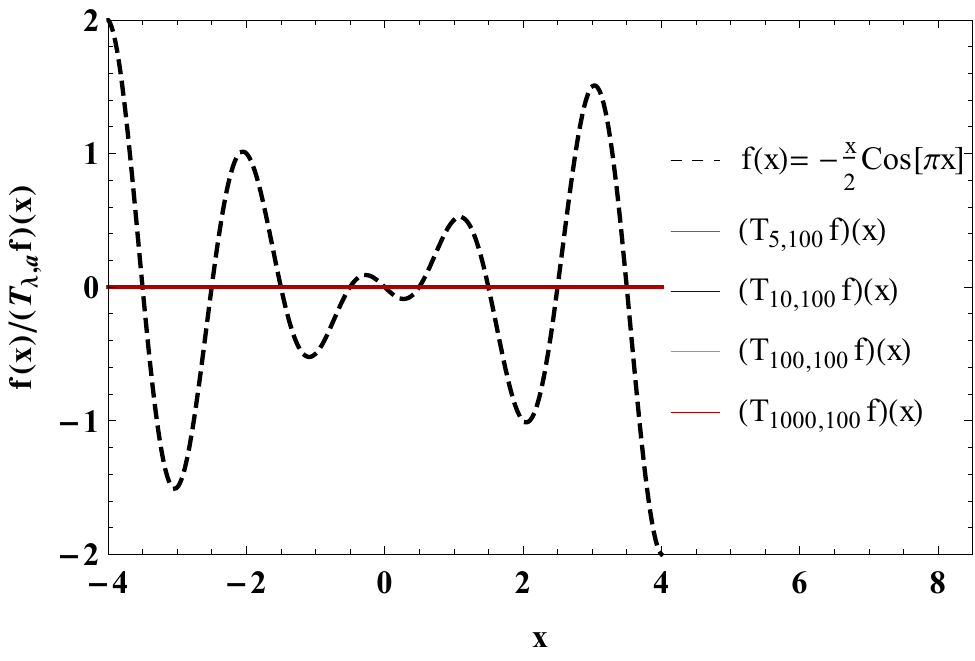}}\hfill
\subfloat[Fig 9: Convergence of $(T_{\lambda,10} f)(x)$]{\includegraphics[width=0.39\textwidth]{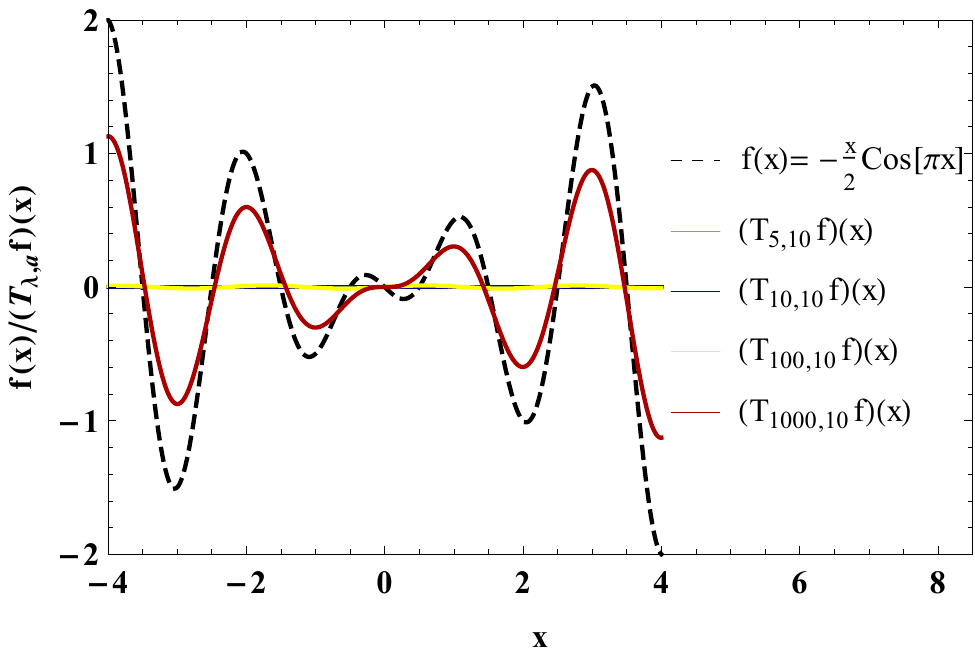}}\\
\subfloat[Fig 10: Convergence of $(T_{\lambda,5} f)(x)$]{\includegraphics[width=0.39\textwidth]{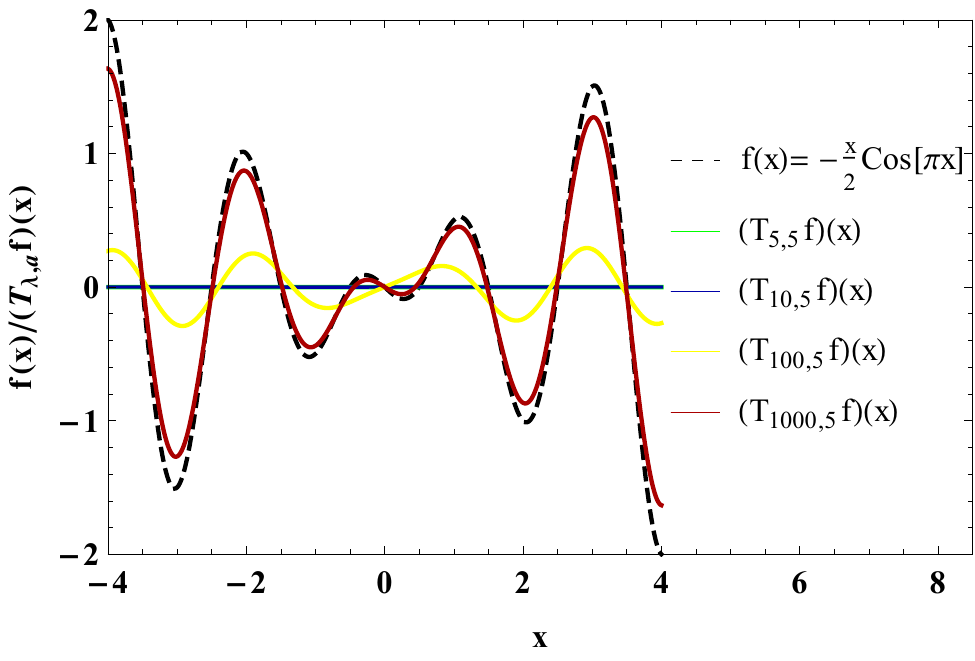}}\hfill
\subfloat[Fig 11: Convergence of $(T_{\lambda,1} f)(x)$]{\includegraphics[width=0.39\textwidth]{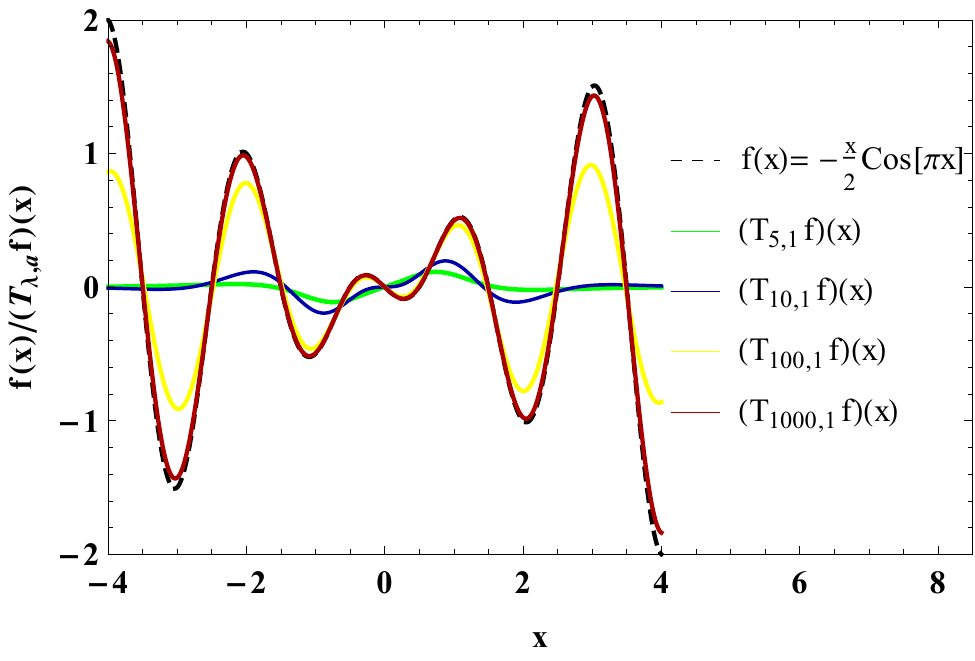}}\\
\subfloat[Fig 12: Convergence of $(T_{\lambda,a \to 0} f)(x)$]{\includegraphics[width=0.39\textwidth]{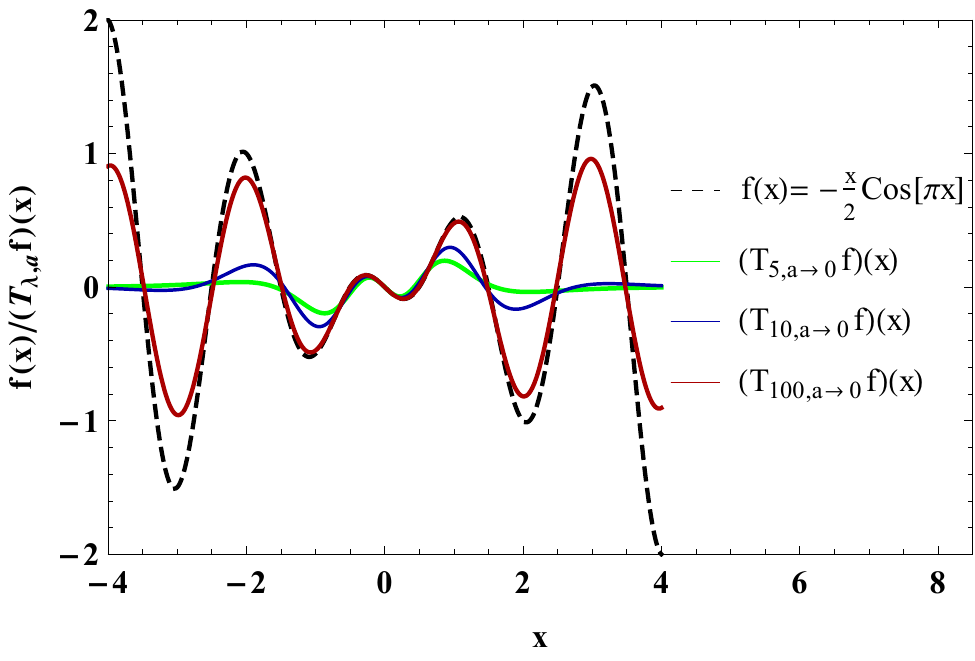}}\hfill \\
\subfloat[Fig 13: Convergence of $(T_{10,a} f)(x)$ ]{\includegraphics[width=0.39\textwidth]{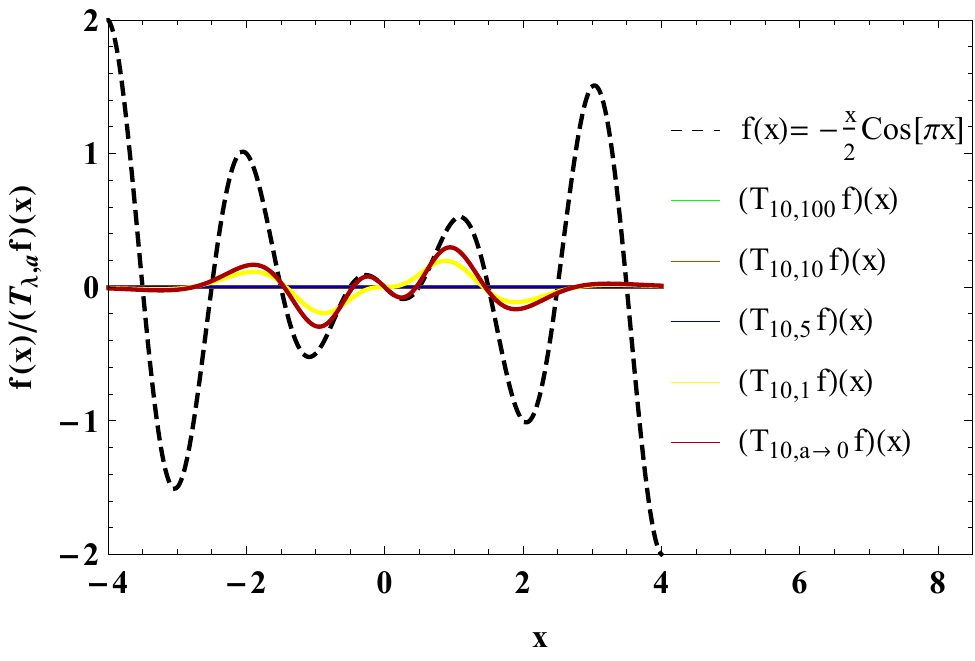}}\hfill
\subfloat[Fig 14: Convergence of $(T_{100,a} f)(x)$]{\includegraphics[width=0.39\textwidth]{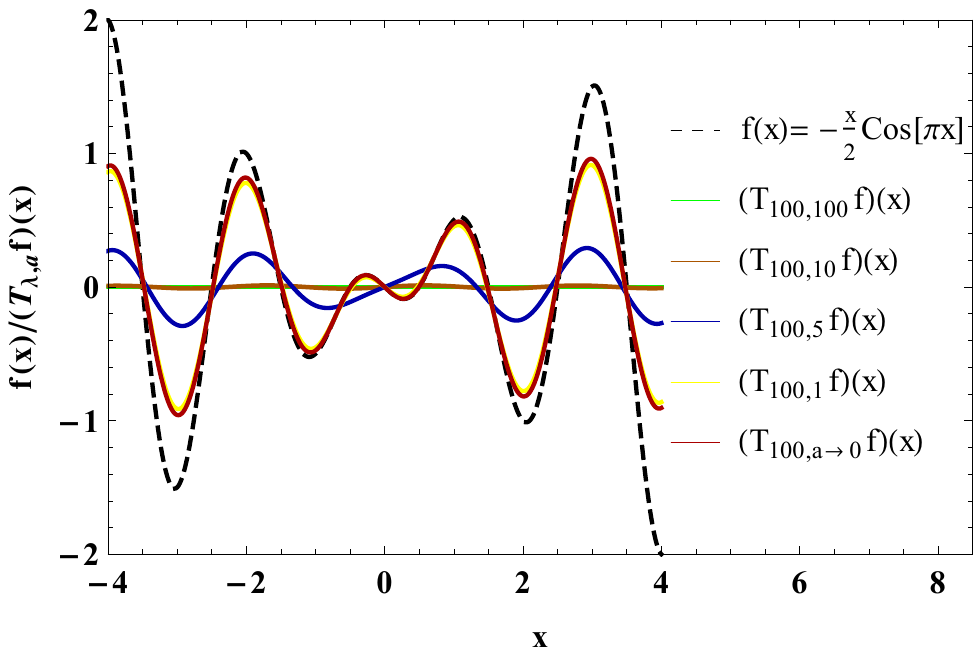}} \\
\end{figure}

\newpage
\textbf{Conclusion:}\\
\vskip0.1in
In the above graphs, Fig. 1 to Fig. 7 and Fig. 8 to Fig. 14 represent the convergence of operators $T_{\lambda,a}$, for continuous, infinitely differentiable and periodic functions $f(x)=x \sin (x)$ and $f(x)=-\frac{x}{2} \cos (\pi x)$ respectively. From Fig. 1 to Fig. 4, we fixed a=100, 10, 5 and 1 in Fig. 1, 2, 3 and 4 respectively; and increased $\lambda$. We can observe that as $\lambda$ increases, the operators $T_{\lambda,a}$ converges to the function $f(x)=x \sin (x)$ in good sense. Fig. 5 represents the limiting case $a \to 0$. In this case too, as $\lambda$ rises, the convergence is getting better. Next, Fig. 6 and Fig. 7 compare the operators' convergence for various values of $a$ by keeping $\lambda$ fixed. In Fig. 6, $\lambda$ fixed to $10$ and in Fig. 7, $\lambda$ fixed to $100$. We can conclude here that, as value of $a$ tends to zero from right side, the convergence is getting better. \\

Similar observations can be noticed from Fig. 8 to Fig. 14 for the function $f(x)=-\frac{x}{2} \cos (\pi x)$. We lifted the value of $\lambda$ and fixed a=100, 10, 5 and 1 in Fig. 8, 9, 10 and 11 respectively. We noticed that as $\lambda$ grows, the operators $T_{\lambda,a}$ converges to the function effectively. Fig. 12 represents the limiting case $a \to 0$. Likewise, in this scenario, the convergence improves with rising $\lambda$. Next, by holding $\lambda$ constant, Fig. 13 and Fig. 14 compare the operators' convergence for different values of $a$, including the limiting case $a \to 0$. By way of illustration, $\lambda$ fixed to $10$ in Fig. 13 and to $100$ in Fig. 14. Here, too, we may draw the conclusion that convergence is improving  as value of $a$ moves towards zero from positive side.\\

Consequently, for both the functions, we may infer that if we focus on comparing the two operators, viz Post-Widder ($a \to 0$) and the operators due to Ismail-May ($a=1$), there is excellent convergence from both the operators'. However, when comparing the outcomes of these two operators, Post-widder provides a more accurate approximation as compared to Ismail May for the functions under consideration.

\section*{Conflict of interest}
 The authors declare that they have no conflict of interest.

%\section*{Statements and Declarations}
%The authors declare that the manuscript has no associated data.
%\subsection{Funding}
%The authors declare that no funds, grants, or other support were received during the preparation of this manuscript.
%\subsection{Competing Interests}
%The authors have no relevant financial or non-financial interests to disclose.
%\subsection{Author Contributions}
%Both authors contributed to the study. The first author, Vijay Gupta supervised Anjali (second author). All authors read and approved the final manuscript.


\begin{thebibliography}{33}

\bibitem{AB} U. Abel, V. Gupta and M. Sisodia, Some new semi-exponential type operators, Revista de la Real Academia de Ciencias Exactas, F\'isicas y Naturales. Serie A. Matem\'aticas RACSAM (2022), 116(87).
Doi: 10.1007/s13398-022-01228-2
\bibitem{acuvgrf} A. M. Acu, V. Gupta, I. Rasa and F. Sofonea, Convergence of special sequences of Semi-Exponential operators, Mathematics (2022), 10, 2978.
Doi: 10.3390/math10162978
\bibitem{agratini}O. Agratini, A. Aral and E. Deniz, On two classes of approximation processes of integral type, Positivity (2017), 21, 1189-1199.
\bibitem{braha}N.L. Braha, T. Mansour, M. Mursaleen and T. Acar, Convergence of $\lambda$-Bernstein operators via power series summability method, J. Appl. Math. Comput., (2021), 65, 125–146.
Doi: 10.1007/s12190-020-01384-x
\bibitem{erd} A. Erd\'ely et al., Higher Transcendental Functions, (Based, in part, on notes left by Harry Bateman), Vol. 1, McGraw-Hill, New York, 1953.
\bibitem{agvg} V. Gupta and Anjali, A new type of exponential operator, Filomat (2023), 37(14), 4629-4638.
Doi: 10.2298/FIL2314629G
\bibitem{Monika} M. Herzog, Semi-exponential operators, Symmetry (2021), 13, 637.
Doi: 10.3390/sym1304063
\bibitem{isma} M. Ismail and C. P. May, On a family of approximation operators, J. Math. Anal. Appl. (1978), 63, 446--462.
\bibitem{Ispir}  N. Ispir, On modified Baskakov operators on weighted spaces, Turkish J. Math. (2001), 25, 355--365.
\bibitem{kajla}A. Kajla and P.N. Agrawal, Approximation properties of Szász type operators based on Charlier polynomials, Turkish J. Math., (2015), 39(6), Article 17.
Doi: 10.3906/mat-1502-80
\bibitem{TW} A. Tyliba and E. Wachnicki, On some class of exponential type operators, Ann. Soc. Math. Pol., Ser. I, Commentat. Math. (2005), 45, 59--73.
\end{thebibliography}
\end{document}